\let\mathcal\mathscr
\numberwithin{equation}{section}
\newtheorem{theorem}{Theorem}[section]
\newtheorem{lemma}[theorem]{Lemma}
\newtheorem{cor}[theorem]{Corollary}
\theoremstyle{definition}
\renewcommand{\phi}{\varphi}
\renewcommand{\rho}{\varrho}
\newcommand{\card}{\#}
\newcommand{\0}{\mathbf{0}}
\newcommand{\PP}{\mathbb{P}}
\renewcommand{\AA}{\mathbb{A}}
\newcommand{\A}{\mathbf{A}}
\newcommand{\FF}{\mathbb{F}}
\newcommand{\ZZ}{\mathbb{Z}}
\newcommand{\Z}{\mathbb{Z}}
\newcommand{\N}{\mathbb{N}}
\newcommand{\NN}{\mathbb{N}}
\newcommand{\QQ}{\mathbb{Q}}
\newcommand{\RR}{\mathbb{R}}
\newcommand{\R}{\mathbb{R}}
\newcommand{\cN}{\mathcal{N}}
\newcommand{\val}{{\rm val}}
\renewcommand{\leq}{\leqslant}
\renewcommand{\le}{\leqslant}
\renewcommand{\geq}{\geqslant}
\renewcommand{\ge}{\geqslant}
\newcommand{\x}{\mathbf{x}}
\newcommand{\y}{\mathbf{y}}
\renewcommand{\a}{\mathbf{a}}
\newcommand{\ve}{\varepsilon}
\newcommand{\ep}{\varepsilon}
\newcommand{\e}{\mathbf{e}}
\DeclareMathOperator{\rank}{rank}
\DeclareMathOperator{\meas}{meas}
\DeclareMathOperator{\Spec}{Spec}
\newcommand{\beql}[1]{\begin{equation}\label{#1}}
\newcommand{\eeq}{\end{equation}}
\newcommand{\sL}{\mathsf{\Lambda}}
\renewcommand{\mod}[1]{\; ( \text{mod} \; #1)}
\renewcommand{\b}[1]{\mathbf{#1}}
\begin{document}

\date{\today}

\title{The geometric sieve for quadrics} 
\author{T.D.\ Browning}

\address{IST Austria\\
Am Campus 1\\
3400 Klosterneuburg\\
Austria}
\email{tdb@ist.ac.at}

\author{D.R.\ Heath-Brown}
\address{Mathematical Institute\\
Radcliffe Observatory Quarter\\ Woodstock Road\\ Oxford\\ OX2 6GG\\ United Kingdom}
\email{rhb@maths.ox.ac.uk}

\subjclass[2010]{11D45 (11G35, 11G50,  11P55,  14G05, 14G25)}

\begin{abstract}
We develop a version of Ekedahl's geometric sieve for   integral  quadratic forms of  rank at least five.  As one ranges over the zeros of such quadratic forms, we use the sieve to compute the density of coprime values of polynomials, and furthermore, to address a question about  local solubility in  families of varieties 
parameterised by the zeros.
\end{abstract}

\date{\today}

\maketitle

\thispagestyle{empty}

\setcounter{tocdepth}{1}
\tableofcontents

\section{Introduction}

The geometric sieve  originates in
pioneering work of Ekedahl \cite{ekedahl}.
It is usually taken to mean that 
for any 
codimension $2$ subvariety $Z\subset \AA_\ZZ^n$ that is defined over
$\ZZ$,  the asymptotic proportion of lattice points in a homogeneously
expanding region in $\RR^n$ that reduce modulo $p$ 
 to an $\FF_p$-point of $Z$, for some  prime $p>M$, approaches zero as
 $M\to \infty$.   
Bhargava   \cite[Thm.~3.3]{sieve} has  established a precise
quantitative version of Ekedahl's result.  This basic fact has yielded an impressive  array of 
applications in arithmetic statistics.

The earliest application of the geometric sieve concerned 
 relatively prime polynomials $f,g\in \ZZ[X_1,\dots,X_n]$. 
It was shown  by 
Ekedahl \cite{ekedahl} that 
the density of 
$n$-tuples of positive integers for which the values of $f$ and $g$ are coprime
is equal to $\prod_{p}(1-c_p p^{-n})$, where 
$$
c_p=\#\{\x\in (\ZZ/p\ZZ)^n: f(\x)\equiv g(\x)\equiv 0 \mod{p}\}.
$$
This result 
has since been generalised and extended to function fields of positive
characteristic by Poonen  
  \cite[Thm.~3.1]{poonen}.

Next, when degree $d$  hypersurfaces $X\subset \PP^m$ with rational coefficients 
  are  ordered by height, a positive proportion are everywhere locally
  soluble,  provided that $(d,m)\neq (2,2)$. This application of the geometric sieve 
  is due to Poonen and
  Voloch \cite[Thm.~3.6]{MR2029869}, but has been  
extended to more general families of varieties $Y\to \PP^n$ over  arbitrary number fields
by Bright, Browning and Loughran 
\cite[Thm.~1.3]{wa}. 

The geometric sieve has also proved instrumental in questions about square-free values of polynomials.  For example, 
using the geometric sieve, 
Bhargava, Shankar and Wang \cite{poly} 
have recently determined the precise  density of  
monic integer polynomials of fixed  degree that   have square-free
discriminant.

Very recently Cremona and Sadek \cite{CS} have used the geometric 
sieve to investigate the proportion of integral Weierestrass equations
of elliptic curves (when ordered by height) which are, for example,
globally minimal. They estalish a form of the sieve which applies to
boxes of unequal sides, somewhat in the spirit of Lemma \ref{t:ekedahl}
below, though less general.

The primary goal of this paper is to achieve a version of the geometric sieve which works for
codimension $2$ subvarieties of aribtrary smooth projective quadrics of rank at least 5.

 \begin{theorem}\label{MT*}
 Let  $X\subset \PP^m$ be a hypersurface defined over $\QQ$
 by a quadratic form of rank at least $5$. Let
 $Z\subset X$ be a codimension $2$ subvariety defined over $\QQ$, let
 $\mathcal{Z}$ be its scheme-theoretic closure in $\PP_\ZZ^m$,  and
 let  
$Z_p=\mathcal{Z} \otimes_\ZZ \FF_p$, for any prime $p$.
   Then  for any $\ve>0$ 
there exists a  constant 
$c_{\ve,X,Z}>0$  depending  only on $X,Z$ and  $\ve$, such that 
the number of 
$x\in X(\QQ)$ of height $H(x)\leq B$ which specialise to a point in
$Z_p(\FF_p)$,  for some $p>M$, is at most 
$$
c_{\ve,X,Z} B^\ve\left(\frac{B^{m-1}}{M\log M} +B^{m-1-1/m}\right).
$$
 \end{theorem}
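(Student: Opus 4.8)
The plan is to parametrise the rational points on $X$ of bounded height by primitive integer vectors $\x\in\ZZ^{m+1}$ lying on the affine quadric cone $Q(\x)=0$, where $Q$ is the quadratic form defining $X$, with $\|\x\|\ll B$. The condition that $x\in X(\QQ)$ specialises into $Z_p(\FF_p)$ for some prime $p>M$ translates (using the scheme-theoretic closure $\mathcal Z$ and the fact that $Z$ has codimension $2$ in $X$, hence codimension $3$ in $\PP_\ZZ^m$) into the assertion that $\x \bmod p$ lies on $Z_p$, i.e.\ that $p$ divides all of a collection of auxiliary forms cutting out $\mathcal Z$. We therefore need to bound
\[
N(B,M)=\#\{\x\in\ZZ^{m+1}_{\mathrm{prim}}: Q(\x)=0,\ \|\x\|\le B,\ \exists\, p>M \text{ with } \bar\x\in Z_p(\FF_p)\}.
\]
The strategy splits into two ranges according to the size of the prime $p$, exactly as in the classical geometric sieve: a ``small-prime'' range $M<p\le B^{1/m}$ (say) where one sums a count of $\x$ on $Q=0$ in a box lying in a fixed residue class modulo $p$, and a ``large-prime'' range $p>B^{1/m}$ where one must use the geometry of $Z$ more cleverly since the congruence alone is no longer restrictive enough.

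\textbf{The small-prime range.} Here I would invoke the available asymptotic (or upper-bound) count for primitive zeros of a quadratic form of rank $\ge 5$ in a box, subject to a congruence condition modulo $p$: since $Z_p$ has codimension $\ge 2$ inside the quadric $\{Q=0\}\otimes\FF_p$ for all but finitely many $p$, the number of admissible residues $\bar\x$ is $O(p^{m-2})$ (up to the bad primes, which contribute a negligible fixed amount, and noting $\dim\{Q=0\}=m$). The circle method / lattice-point count for $Q=0$ in a box of side $B$ with $\x$ in a fixed class mod $p$ produces roughly $B^{m-1}/p^{m}$ per class once $p\ll B^{1/m}$ is small enough for the main term to dominate, so summing over the $O(p^{m-2})$ classes and then over $M<p\le B^{1/m}$ gives $\ll B^{m-1}\sum_{p>M} p^{-2}\ll B^{m-1}/(M\log M)$, which is the first term in the bound. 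The $B^\ve$ absorbs divisor-type losses and the error terms in the quadric point count; Lemma~\ref{t:ekedahl} (the uniform-box version of Ekedahl's lemma referenced above) is what powers this step.

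\textbf{The large-prime range, the main obstacle.} For $p>B^{1/m}$ a single congruence is weak, and here is where the real work lies: one must exploit that $\mathcal Z$ has codimension $3$ in $\PP^m_\ZZ$ to show that $\x$ is pinned down to a very thin set. The idea is that if $\bar\x\in Z_p$ then $\x$ lies on the integral variety $\mathcal Z$ modulo $p$ for a large prime, and by a Bezout/effective-Nullstellensatz argument the defining forms of $\mathcal Z$ evaluated at $\x$ are nonzero integers of size $\ll B^{O(1)}$ each divisible by the large prime $p$; this forces either $p\ll B^{O(1)}$ with very few choices, or $\x$ lies on a proper subvariety of $\{Q=0\}$ of dimension $\le m-2$. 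Intersecting with the quadric and counting integer points of height $\le B$ on such a subvariety — using dimension-growth type bounds for quadrics, or a determinant/geometry-of-numbers argument tailored to rank $\ge 5$ — yields $O(B^{m-1-1/m+\ve})$, the second term. I expect the technical heart to be making the passage from ``$\bar\x\in Z_p$ for some huge $p$'' to ``$\x$ on a low-dimensional subvariety'' both uniform in $Z$ and compatible with the quadric constraint; controlling the contribution of the finitely many bad primes (where $\mathcal Z$ may degenerate) and ensuring the implied constants depend only on $X,Z,\ve$ is routine but must be tracked. Combining the two ranges gives the stated estimate
\[
c_{\ve,X,Z}B^\ve\!\left(\frac{B^{m-1}}{M\log M}+B^{m-1-1/m}\right).
\]
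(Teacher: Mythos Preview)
Your translation to the counting problem of Theorem~\ref{MT+} is correct, but both halves of the proposed dichotomy have genuine gaps. For the small primes, the circle-method asymptotic \eqref{eq:asymptotic} carries an error $O(p^\Delta B^{m-1-\delta})$ per residue class; after summing over the $O(p^{m-2})$ classes on the cone of $Z_p$ and then over $p$, this error is controlled only for $p\ll B^{\delta/(m-1+\Delta)}$, a far smaller threshold than $B^{1/m}$. (In Section~\ref{s:coprime} the paper runs exactly this argument, but only up to a tiny power of $B$, and then \emph{invokes} Theorem~\ref{MT+} for the larger primes; so your small-prime step cannot reach far enough, and what is left over is the whole theorem.) Note also that Lemma~\ref{t:ekedahl} is the affine sieve in $\AA^n$, not a statement about zeros of $Q$ in congruence classes, so it does not ``power'' this step as you suggest. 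For the large primes, from $p\mid F_i(\x)$ with some $F_i(\x)\neq 0$ you obtain only $p\ll B^{O(1)}$, which is automatic and does not force $\x$ onto any proper subvariety; counting pairs $(\x,p)$ in this fashion yields only the trivial $O(B^{m-1}\log B)$. There is no off-the-shelf dimension-growth or determinant bound on a quadric that delivers $B^{m-1-1/m+\ve}$ here---manufacturing precisely such a bound is the paper's main contribution, and it is where the hypothesis $\rank Q\ge 5$ actually enters.

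The paper's route is entirely different and involves no split by prime size. One first normalises to $Q=X_0X_1-Q_0(X_2,\dots,X_n)$ with $\rank Q_0\ge 3$, and then eliminates $X_0$ from the $F_i$, producing coprime forms $K_i(X_1,\dots,X_n)$ that agree with $X_1^D F_i$ on $Q=0$. For each fixed value $x_1=a$, the quadric condition becomes the divisibility $a\mid Q_0(x_2,\dots,x_n)$; Lemma~\ref{LL} covers the solutions of this congruence (modulo the good part $q$ of $a$) by $O_\ve(q^{n-3+\ve})$ lattices in $\ZZ^{n-1}$ of determinant $q^{n-2}$, and bounds how many of them have largest successive minimum exceeding a parameter $L$. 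On each lattice one applies the \emph{lopsided} affine geometric sieve of Lemma~\ref{t:ekedahl} to the $K_i$. Balancing the bad-lattice contribution against the sieve output with the choice $L=B^{1-1/n}$ is what produces the two terms $B^{n-1+\ve}/(M\log M)$ and $B^{n-1-1/n+\ve}$.
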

 
The height function $H $ in Theorem \ref{MT*}   is the naive exponential
height on $\PP^{m}(\QQ)$. For $X$ as in the theorem, the
Hardy--Littlewood circle method   ensures 
that either $X(\RR)=\emptyset$ or there is
a constant $c_X>0$ such that  
$$
\#\{x\in X(\QQ): H(x)\leq B\}\sim c_X B^{m-1},
$$
as $B\to \infty$. This follows from work of Birch
\cite{birch}, for example.  Theorem \ref{MT*}  therefore implies 
 that it is rare for rational points on 
$X$ to specialise to points on $Z_p(\FF_p)$ for large primes $p$.

 We shall prove Theorem \ref{MT*} in the following more explicit form.
\begin{theorem}\label{MT+}
Let 
$Q(X_0,\dots,X_n)$ be a quadratic form defined over $\ZZ$ with rank
at least $5$, and let
$F_1(X_0,\dots, X_n),\dots,F_r(X_0,\dots, X_n)$
be forms defined over $\ZZ$. Assume that the variety
$Z\subset\mathbb{P}^{n}$ given by
\[Z:\, Q(X_0,\dots,X_n)=F_1(X_0,\dots, X_n)=
\dots=F_r(X_0,\dots, X_n)=0\]
has codimension at least $3$ in $\mathbb{P}^n$.
For $B,M\ge 1$ let  $N(B,M)$  be the number of vectors
$\x\in\ZZ^{n+1}$ such that
\[Q(x_0,\dots,x_n)=0,\]
with  $|\x|\leq B$, and for which 
$F_1(x_0,\dots,x_n),\dots,F_r(x_0,\dots,x_n)$ have a
common prime divisor $p> M$. Then
\[N(B,M)\ll_{\ve, Q,F_1,\dots,F_r} \frac{B^{n-1+\ve}}{M\log M}+B^{n-1-1/n+\ve},\]
for any fixed $\ep>0$.
\end{theorem}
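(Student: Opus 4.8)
The plan is to split the count $N(B,M)$ according to the size of the prime $p$, treating a dyadic range $p \sim P$ (and combining the contributions afterwards by summing over dyadic $P$). Fix a dyadic range $P < p \leq 2P$. The first regime is the range of small-to-moderate primes, say $P \leq B^{1/n}$ (or some convenient threshold), where we can afford to invoke an equidistribution / point-counting estimate for zeros of $Q$ in short boxes modulo $p$. The second regime is that of large primes $P > B^{1/n}$, where there are few zeros of the $F_i$ to begin with and a direct covering argument wins.

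\textbf{Small primes.} For $p \leq B^{1/n}$, fix a residue class $\x \equiv \a \pmod{p}$ with $\a$ lying on the mod-$p$ reduction of $Z$, and count integral zeros $\x$ of $Q$ with $|\x| \leq B$ and $\x \equiv \a \pmod p$. After substituting $\x = \a + p\y$ this becomes a count of integral zeros of a quadratic form of the same rank $\geq 5$ in a box of side $\asymp B/p$, to which we apply the circle method / Birch-type bound; the upper bound is $O((B/p)^{n-1+\ve})$ uniformly in $p$, provided $B/p$ is not too small, i.e. $p \leq B^{1-\delta}$. Since $Z$ has codimension $\geq 3$ in $\PP^n$, its reduction $Z_p$ has at most $O(p^{n-2})$ points in $\PP^{n}(\FF_p)$ — here one must be a little careful about the components of $\mathcal Z$ over $\FF_p$, but the codimension hypothesis on $Z$ over $\QQ$ together with a bound on the degrees gives $\card Z_p(\FF_p) \ll p^{n-2}$ for all $p$, possibly after excluding finitely many bad primes absorbed into the implied constant. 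Summing the contribution $(B/p)^{n-1+\ve}$ over the $O(p^{n-2} \cdot p)$ admissible residue classes $\a$ (the extra $p$ from scaling out the projective class to an affine cone point, modulo units) and then over $P < p \leq 2P$ gives $\ll B^{n-1+\ve} P^{-1}$ per dyadic block, and summing over dyadic $P \in (M, B^{1/n}]$ produces the desired $B^{n-1+\ve}/(M\log M)$ — the $\log M$ is recovered from the fact that $\sum_{M < p \leq B^{1/n}} p^{-1} \asymp \log\log B - \log\log M$, so in fact one arranges the dyadic sum to telescope against $\sum_{P} (P \log P)^{-1} \ll (M \log M)^{-1}$ by inserting the prime-counting density $\pi(2P) - \pi(P) \asymp P/\log P$ when summing over actual primes rather than all integers.

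\textbf{Large primes.} For $p > B^{1/n}$ the saving instead comes from the scarcity of zeros of $F_1, \dots, F_r$ modulo $p$. If $\x$ is counted then $\x \bmod p$ is an $\FF_p$-zero of the ideal $(F_1, \dots, F_r)$; since the variety cut out by the $F_i$ alone in $\PP^n$ — intersecting $Z$ with the quadric removed — still has dimension $\leq n-2$ after we also impose $Q = 0$ (that is the codimension-$\geq 3$ hypothesis), but we may only have one or two $F_i$'s, so the right object is: $\x$ lies on $Q = 0$ and reduces mod $p$ into $Z_p$. Lift to the affine cone and observe that the set of $\x \in \ZZ^{n+1}$ with $|\x| \leq B$, $Q(\x)=0$, and $\x \bmod p \in \widehat{Z_p}$ is contained in a union over cone points $\a$ of the affine lattice $\a + p\ZZ^{n+1}$; since $p > B^{1/n}$, each such lattice meets the box $|\x| \leq B$ in $O(1 + (B/p)^{n+1}) = O(B^{n})$ total, but more efficiently one fixes $n-1$ coordinates freely in $O(B^{n-1})$ ways, and the constraint $\x \bmod p \in \widehat{Z_p}$ together with $Q(\x) = 0$ cuts the remaining two coordinates to $O(1)$ choices once $p$ exceeds the size $B$ of those coordinates — this is exactly where $p > B^{1/n}$ must be upgraded, and the honest statement is that one gets $O(B^{n-1})$ from each of the $O(B)$ primes in a dyadic block near $B^{1/n}$, but these primes are few so the total over \emph{all} $p > B^{1/n}$ is controlled by the sparsity of the cone of $Z$: summing $\card \widehat{Z_p}(\FF_p) \cdot (1 + B/p)^{n-1} \ll p^{n-1}(1 + B/p)^{n-1}$ over $p > B^{1/n}$ and intersecting with $|\x|\le B$ forces a global count $\ll B^{n-1-1/n+\ve}$, the second term in the theorem, by an elementary geometry-of-numbers / determinant argument on the lattice of each residue class.

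\textbf{Main obstacle.} The delicate point is the \emph{uniformity in $p$} of the small-box count for zeros of the quadratic form in the first regime: one needs a version of the circle-method asymptotic/upper bound for $Q(\x)=0$ inside a box of side $Y = B/p$ that is genuinely uniform as $Y$ and $p$ both grow, which is essentially Lemma~\ref{t:ekedahl} (the ``Ekedahl-type'' lemma advertised in the introduction for boxes of unequal sides) applied after the affine substitution $\x = \a + p\y$. Managing the transition region $p \approx B^{1/n}$, where neither the circle-method bound nor the pure lattice bound is individually strong enough, and gluing the two estimates so that the exponents match up at $B^{n-1-1/n+\ve}$, is the second technical hurdle; it is handled by choosing the crossover threshold at $p \asymp B^{1/n}$ precisely so that $B^{n-1+\ve}/P = B^{n-1-1/n+\ve}$ there, and checking the large-prime bound is $O(B^{n-1-1/n+\ve})$ at and beyond that threshold. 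Everything else — the bound $\card Z_p(\FF_p) \ll p^{n-2}$, handling finitely many bad primes, passing between the projective variety and its affine cone, and the dyadic summation — is routine.
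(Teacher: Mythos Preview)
Your plan has a genuine gap at exactly the step you flag as the main obstacle. The uniform bound $O((B/p)^{n-1+\ve})$ for integral zeros of $Q$ in a fixed residue class modulo $p$ is \emph{not} what Lemma~\ref{t:ekedahl} provides: that lemma is the geometric sieve for $\AA^n$, bounding the number of lattice points in a box at which a given family of polynomials shares a large prime factor. It says nothing about counting zeros of a quadratic form in a small box, and after the substitution $\x=\a+p\y$ the equation $Q(\x)=0$ becomes an \emph{inhomogeneous} quadratic in $\y$ with coefficients growing with $p$, not a problem to which Lemma~\ref{t:ekedahl} applies. Even granting your bound, the arithmetic fails: since $\dim Z\le n-3$ one has $\#Z_p(\FF_p)\ll p^{n-3}$ and $\#\widehat{Z_p}(\FF_p)\ll p^{n-2}$ (your counts are each too large by a factor of $p$), and then
\[
\sum_{M<p}\;\#\widehat{Z_p}(\FF_p)\cdot (B/p)^{n-1}
\;\ll\; B^{n-1}\sum_{M<p}p^{-1},
\]
which diverges; it does not give $B^{n-1}/(M\log M)$. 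What the small-prime argument actually requires is the equidistribution estimate $N(B;p,\a)\ll B^{n-1}p^{-n}+(\text{error})$, and controlling the error uniformly for $p$ as large as $B^{1/n}$ is essentially as hard as the theorem. Indeed the paper does run exactly your outline in Section~\ref{s:coprime} to prove the corollaries, but there the large-prime tail is disposed of by invoking Theorem~\ref{MT+} itself, so this route is circular as a proof of Theorem~\ref{MT+}. Your large-prime paragraph never uses the quadric constraint $Q(\x)=0$ and, as written, yields nothing beyond trivial bounds.

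The paper's argument is structurally different and avoids any uniform point-count on the quadric in residue classes. One first reduces to $Q=X_0X_1-Q_0(X_2,\dots,X_n)$ with $\rank Q_0\ge 3$, then eliminates $X_0$: on the quadric one has $x_1^DF_i(\x)=K_i(x_1,\dots,x_n)$ for forms $K_i$ not involving $X_0$ and with no common factor other than powers of $X_1$. Fixing $x_1$, the residual constraint from $Q=0$ is the divisibility $x_1\mid Q_0(x_2,\dots,x_n)$, which one weakens to $q\mid Q_0$ for $q$ the product of good prime divisors of $x_1$, and then covers the solutions $(x_2,\dots,x_n)$ by $O(q^{n-3+\ve})$ lattices of determinant $q^{n-2}$ (Lemma~\ref{LL}). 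In each lattice one passes to a reduced basis and applies Lemma~\ref{t:ekedahl} to the transformed polynomials $K_i$ in the resulting \emph{lopsided} box; this is the genuine role of the lopsided affine sieve. Lattices whose largest successive minimum exceeds a parameter $L$ are handled separately via a dual-lattice counting argument, and the choice $L=B^{1-1/n}$ is what produces the secondary term $B^{n-1-1/n+\ve}$.
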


Here we write $|\cdot|$ for the supremum norm
$||\cdot||_{\infty}$ on $\RR^m$ for any $m\in\NN$.
These results could be false when the underlying quadratic form has
rank less than 5. For example, if $n\ge 3$ and
$$
Q(X_0,\dots,X_n)=X_0X_1-X_2X_3,$$
or 
$$
Q(X_0,\dots,X_n)=X_0X_1-X_3^2,
$$
then we may take $Z$ to be the linear space $X_1=X_2=X_3=0$. If
$M$ is in the range
$B^{1/2}<M\le B^{3/4}$, say, then  we may consider points 
$$
(a,0,bp,0,x_4,\dots,x_n)
$$ 
of height at most $B$, where $p$
ranges over primes in the interval 
$M<p\le B$, and 
$\gcd(a,bp)=1$. There will be at least $cB^{n-1}$ such points, for a
suitable absolute constant $c>0$. Moreover each of them lies on $Q=0$,
and each of them reduces to a point of $Z$ modulo the relevant
prime $p$.

\medskip
A result similar in spirit to Theorem \ref{MT*} has been proved
simultaneously by Cao and Huang \cite[Thm.~4.7]{CH}, for affine quadrics defined
by 
$$
Q(X_1,\ldots,X_n)=m,
$$ 
with $m$ a non-zero integer.
Their result is more delicate
than ours, saving only a factor $\sqrt{\log B}$.
\medskip

The case in which the quadric hypersurface has no non-singular rational
point is uninteresting,  but the examples above leave open the
situation in which the quadratic form takes the shape
\[Q(X_0,\dots,X_n)=X_0X_1-(X_2^2-dX_4^2),\]
for some non-square $d\in\ZZ$. This is covered in the following
theorem.
\begin{theorem}\label{MT++}
Let 
$Q(X_0,\dots,X_n)$ be a quadratic form defined over $\ZZ$, equivalent
over $\QQ$ to a non-zero multiple of $X_0X_1-(X_2^2-dX_4^2)$ for
some non-square $d\in\ZZ$. Let
$F_1(X_0,\dots, X_n),\dots,F_r(X_0,\dots, X_n)$
be forms defined over $\ZZ$. Assume that the variety
$Z\subset\mathbb{P}^{n}$ given by
\[Z:\, Q(X_0,\dots,X_n)=F_1(X_0,\dots, X_n)=
\dots=F_r(X_0,\dots, X_n)=0\]
has codimension at least $3$ in $\mathbb{P}^n$.
Then 
\[N(B,M)\ll_{\ve, Q,F_1,\dots,F_r} \frac{B^{n-1+\ve}}{M\log M}+B^{n-3/2+\ve},\]
for any fixed $\ep>0$, 
where $N(B,M)$ is defined in Theorem \ref{MT+} for  $B,M\geq 1$.
\end{theorem}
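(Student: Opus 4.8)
The plan is to follow the architecture of the proof of Theorem~\ref{MT+}, but with the rank-$\geq 5$ estimate for lattice points in residue classes replaced by a substitute tailored to the rank-$4$ norm form $x_0x_1=N_{K/\QQ}(x_2+x_4\sqrt d)$, where $K=\QQ(\sqrt d)$; the hypothesis that $d$ is a non-square will be used, crucially, to rule out rational lines on the associated quadric surface. First I would reduce, after a $\QQ$-linear change of variables absorbed into $F_1,\dots,F_r$ (so that the integral zeros of $Q$ become a fixed sublattice, which is split into residue classes), to the case $Q=X_0X_1-(X_2^2-dX_4^2)$, in which $Q$ involves only $X_0,X_1,X_2,X_4$ while $X_3,X_5,\dots,X_n$ are free. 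The divisor bound applied to $x_0x_1=x_2^2-dx_4^2$ (together with the locus $x_2=x_4=0$, which meets $\{Q=0\}$ in a subvariety of projective dimension $n-3$) yields $N(B,M)\ll B^{n-1+\ve}$ unconditionally, so one may assume $M$ larger than any fixed constant and also $M\leq B^{2D}$ with $D=\max_i\deg F_i$, since for larger $M$ the divisibility $p\mid F_i(\x)$ forces every $F_i(\x)=0$, confining $\x$ to the affine cone over $Z$ and contributing only $\ll B^{n-2+\ve}$.

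Next I would split the admissible primes at $p=B^{1/2}$. For the large range $B^{1/2}<p\leq CB^{2D}$ I would eliminate the free variables $X_3,X_5,\dots,X_n$ one at a time, by iterated resultants, from three suitably generic $\QQ$-linear combinations $G_1,G_2,G_3$ of the $F_i$: from $p\mid G_j(\x)$ for all $j$ one deduces $p\mid S(x_0,x_1,x_2,x_4)$ for a fixed non-zero polynomial $S$ not divisible by $Q$, while $Q(x_0,x_1,x_2,x_4)=0$, and at each elimination the variable removed lies in $O(1)$ residue classes modulo $p$, hence in $\ll B/p+1$ values. The decisive geometric input now is that, $d$ being a non-square, the rank-$4$ quadric $X_0X_1=X_2^2-dX_4^2$ in $\PP^3$ contains no $\QQ$-rational line, so $\{Q=S=0\}$ is a $\QQ$-curve (or finite set) with $\ll B^{1+\ve}$ rational points of height $\leq B$; combined with the elementary bound that a non-zero integer of size $B^{O(1)}$ has $O(1)$ prime factors exceeding $B^{1/2}$, this disposes of the subcase $S(x_0,x_1,x_2,x_4)\neq 0$, and the case $p>B$ is handled similarly (the eliminated variables then being pinned to $O(1)$ values outright). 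The bookkeeping gives a total contribution $\ll B^{n-3/2+\ve}$ from the large range.

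For the small range $M<p\leq B^{1/2}$, the affine cone over $Z_p$ has dimension $\leq n-2$, hence $\ll p^{n-2}$ points over $\FF_p$. For each residue $\a\bmod p$ on that cone I would write $x_1=(x_2^2-dx_4^2)/x_0$ and strip off the $n-3$ free coordinates, each confined to $\ll(B/p)^{n-3}$ values in its prescribed class, reducing the count of $\x\in\ZZ^{n+1}$ with $Q(\x)=0$, $|\x|\leq B$ and $\x\equiv\a\ (\mathrm{mod}\ p)$ to
\[
\cN_p(B;s_0,s_2,s_4)=\#\bigl\{(x_0,x_1,x_2,x_4)\in\ZZ^4:\ x_0x_1=x_2^2-dx_4^2,\ |x_i|\leq B,\ x_0\equiv s_0,\ x_2\equiv s_2,\ x_4\equiv s_4\ (\mathrm{mod}\ p)\bigr\}
\]
(the loci $p\mid s_0$ or $p\mid s_2^2-ds_4^2$ being lower-dimensional and contributing less). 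Since $p^{n-2}(B/p)^{n-3}=pB^{n-3}$, while $\sum_{p>M}p^{-2}\ll(M\log M)^{-1}$ and $\sum_{p\leq B^{1/2}}p\ll B/\log B$, the estimate
\[
\cN_p(B;s_0,s_2,s_4)\ll B^{\ve}\Bigl(\frac{B^{2}}{p^{3}}+1\Bigr)\qquad(1\leq p\leq B^{1/2})
\]
would give a contribution $\ll B^{n-1+\ve}/(M\log M)+B^{n-2+\ve}$ from the small range, which together with the large range completes the proof.

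Establishing this last estimate is the main obstacle. Its content is that the divisors $x_0\leq B$ of the norm-form values $x_2^2-dx_4^2$ equidistribute modulo $p$, on average over $(x_2,x_4)$ in a box, with an essentially square-root-free error when $p\leq B^{1/2}$; the barrier at $B^{1/2}$ is the familiar one for divisor-type functions in arithmetic progressions to large moduli, here aggravated by the requirement that $x_0$ and the complementary divisor $x_1$ both lie in $[-B,B]$. I would attack it by parametrising the solutions of $x_0x_1=N_{K/\QQ}(\xi)$ through the factorisation of the ideal $(\xi)$ in $\cO_K$, turning the congruence $x_0\equiv s_0\bmod p$ into a multiplicative condition on a product of ideal norms; detecting this with Dirichlet characters modulo $p$ produces the predicted main term $\ll B^{2+\ve}/p^{3}$ together with character sums governed by Hecke $L$-functions of $K$, which supply the required cancellation throughout $p\leq B^{1/2}$. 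A more hands-on alternative exploits the geometry of the indefinite form: when $|x_0|$ is small, $|x_1|\leq B$ forces $|x_2^2-dx_4^2|\leq B|x_0|$, pinning $(x_2,x_4)$ to a thin neighbourhood of the lines $x_2=\pm\sqrt d\,x_4$ whose lattice points one counts directly, the complementary regime $|x_0|\asymp B$ being treated by the divisor bound; balancing the two again yields the $B^{1/2}$ saving. I expect this equidistribution input, and above all keeping its dependence on $p$ completely uniform, to be where essentially all the difficulty lies.
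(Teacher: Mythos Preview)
Your architecture diverges substantially from the paper's, and the central gap is exactly the one you flag yourself: the uniform bound
\[
\cN_p(B;s_0,s_2,s_4)\ll B^{\ve}\bigl(B^{2}/p^{3}+1\bigr)
\]
is never established. Your sketches (Hecke $L$-functions for $K=\QQ(\sqrt d)$, or a geometric dichotomy on $|x_0|$) are plausible programmes, not proofs, and the uniformity in $p$ up to $B^{1/2}$ is precisely the level-of-distribution barrier for divisor-type sums. Worse, the dismissal of the degenerate loci ``$p\mid s_0$ or $p\mid s_2^2-ds_4^2$'' as ``lower-dimensional and contributing less'' is unjustified: nothing prevents $Z$ from lying entirely inside $\{X_2=X_4=0\}$ (take $F_1=X_1$, $F_2=X_2$, $F_3=X_4$, which satisfies the codimension-$3$ hypothesis), so every relevant residue has $s_2=s_4=0$. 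In that situation the \emph{individual} bound $\cN_p(B;1,0,0)\ll B^{2+\ve}/p^3$ is not at all clear --- a crude divisor estimate only gives $B^{2+\ve}/p^2$ --- and one would have to reorganise the argument around a bound for $\sum_{s_0}\cN_p(B;s_0,0,0)$ instead.

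The paper bypasses this analytic obstacle entirely with a geometry-of-numbers observation that you are missing. It follows the same template as the rank-$\geq 5$ case: fix $x_1=a$, factor $a=q_1q_2\cdot(\text{ramified})$ into split and inert parts, and cover the solutions of $x_2^2-dx_4^2\equiv 0\pmod{q_1q_2}$ by $2^{\omega(q_1)}$ lattices
\[
\sL(\rho;q_1,q_2)=\{(x_2,\dots,x_n):\,x_2\equiv\rho x_4\pmod{q_1},\ x_2\equiv x_4\equiv 0\pmod{q_2}\}.
\]
The decisive new point is that these lattices are automatically \emph{well-rounded}: for the $2$-dimensional piece $\{u\equiv\rho v\pmod{q_1}\}$, every nonzero vector satisfies $u^2-dv^2\equiv 0\pmod{q_1}$ and $u^2-dv^2\neq 0$ (here is where non-square $d$ enters), so $||(u,v)||^2\gg q_1$, forcing both successive minima to be $\asymp q_1^{1/2}$; together with the inert factor this gives $\lambda_{\max}\ll q_1^{1/2}q_2$. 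Consequently there is \emph{no} bad-lattice tail to handle, and a direct application of the affine geometric sieve (Lemma~\ref{t:ekedahl}) with $V\ll B^{n-1}/(q_1q_2^2)$ and $B_{\min}\gg B/(q_1^{1/2}q_2)$, summed over $q_1q_2\leq B$, gives the theorem in a few lines. This is both more elementary and more robust than your route; your approach, if the $\cN_p$ estimate could be proved, would trade that simplicity for potentially sharper information about equidistribution in residue classes, but as written it leaves the heart of the matter open.
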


Our proof of Theorem \ref{MT++}  will be a non-trivial variant of that for Theorem~\ref{MT+}. 
\medskip

It is natural to ask what applications 
are available for   our version of the  geometric sieve for quadrics.
We first demonstrate that the result
of Ekedahl \cite{ekedahl} and Poonen 
  \cite[Thm.~3.1]{poonen} about coprime values of polynomials remains
  true when one restricts to the much thinner set of zeros of a given
  quadratic form.  
For any $\mathcal{S}\subset \ZZ^n$ and any non-singular quadratic form
$Q\in \ZZ[X_1,\dots,X_n]$ we define  
\begin{equation}\label{eq:muQ}
\mu_Q(\mathcal{S})=\lim_{B\to \infty} \frac{\#\{\x\in
  \mathcal{S}\cap[-B,B]^n: Q(\x)=0\}}{\#\{\x\in \ZZ^n\cap[-B,B]^n:
  Q(\x)=0\}}, 
\end{equation}
if the  limit exists.  Given polynomials $f,g\in 
\ZZ[X_1,\dots,X_n]$, let 
$$
\mathcal{R}_{f,g}=\{\x\in \ZZ^n: \gcd(f(\x),g(\x))=1\}.
$$
We shall prove the following result in Section \ref{s:coprime}.

\begin{cor}\label{cor:gcd}
Assume that $Q$ is indefinite and has rank at least $5$.
Let 
$f,g\in 
\ZZ[X_1,\dots,X_n]$ be homogenous, such that the variety $Q=f=g=0$ has
codimension $3$ in $\mathbb{P}^{n-1}$.  Then
$\mu_Q(\mathcal{R}_{f,g})$exists, and is equal to 
$\prod_p \mu_{Q,p}(\mathcal{R}_{f,g})$, where
$$
\mu_{Q,p}(\mathcal{R}_{f,g})=
\hspace{-0.2cm}
\lim_{k\to \infty} 
\frac{\#\{\x\in (\ZZ/p^k\ZZ)^n: Q(\x)\equiv 0\mod{p^k}, ~p\nmid
  \gcd(f(\x),g(\x))\}} 
{
\#\{\x\in (\ZZ/p^k\ZZ)^n: Q(\x)\equiv 0\mod{p^k}\}}.
$$
\end{cor}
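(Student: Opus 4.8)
The plan is to turn the assertion into a point count for the quadric $Q=0$ and then run a sieve that invokes Theorem~\ref{MT+} for the contribution of large primes and the circle method for $Q$ for small primes. Write $A(B)=\{\x\in\ZZ^n:|\x|\le B,\ Q(\x)=0\}$, $N(B)=\#A(B)$, and, for a positive integer $d$, $M_d(B)=\#\{\x\in A(B):d\mid f(\x),\ d\mid g(\x)\}$. Since $Q$ is indefinite of rank $\ge5$ it is isotropic over $\RR$ and over every $\QQ_p$, so the circle method \cite{birch} gives $N(B)=c_QB^{n-2}+o(B^{n-2})$ with $c_Q>0$ and, for each \emph{fixed} $d$, $M_d(B)=\omega(d)N(B)+o_d(B^{n-2})$, where $\omega(d)$ is the $d$-adic density of $\{d\mid f,\ d\mid g\}$ among the zeros of $Q$; by the Chinese Remainder Theorem $\omega(d)=\prod_{p\mid d}\omega_p$ with $\omega_p:=1-\mu_{Q,p}(\cR_{f,g})$. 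The hypothesis that $Q=f=g=0$ has codimension $3$ in $\PP^{n-1}$ makes its reduction modulo $p$ have dimension $n-4$, hence $\ll p^{n-3}$ points on its affine cone, for all but finitely many $p$; this forces $\omega_p\ll p^{-2}$ for such $p$, so $\prod_p\mu_{Q,p}(\cR_{f,g})=\prod_p(1-\omega_p)$ converges absolutely. It thus suffices to prove $\#\bigl(\cR_{f,g}\cap A(B)\bigr)=\bigl(\prod_p(1-\omega_p)+o(1)\bigr)N(B)$ as $B\to\infty$.

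First I would sieve with a fixed cut-off $z$ (sent to $\infty$ only at the end) and a second cut-off $M=B^{\eta}$ with $\eta>0$ small and fixed. Writing $\mu$ for the Möbius function and $P_y=\prod_{p\le y}p$, one has, for every $\x\in\ZZ^n$, the elementary identity
\[
\1\{\gcd(f(\x),g(\x))=1\}=\sum_{d\mid P_z}\mu(d)\,\1\{d\mid f(\x),\ d\mid g(\x)\}-\1\{\mathcal{E}_{z,M}(\x)\}-\1\{\mathcal{E}_M(\x)\},
\]
where $\mathcal{E}_{z,M}(\x)$ is the event that no prime $\le z$ divides $\gcd(f(\x),g(\x))$ but some prime in $(z,M]$ does, and $\mathcal{E}_M(\x)$ the event that no prime $\le M$ divides $\gcd(f(\x),g(\x))$ but some prime $>M$ does; this is verified by inspecting the least prime factor of $\gcd(f(\x),g(\x))$, and it holds also on the locus $f(\x)=g(\x)=0$, where all three terms vanish. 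Summing over $\x\in A(B)$ gives
\[
\#\bigl(\cR_{f,g}\cap A(B)\bigr)=\sum_{d\mid P_z}\mu(d)M_d(B)-R_{z,M}(B)-R_M(B),
\]
where $R_{z,M}(B)$ and $R_M(B)$ are the respective sums of $\1\{\mathcal{E}_{z,M}(\x)\}$ and $\1\{\mathcal{E}_M(\x)\}$ over $\x\in A(B)$, and I would bound the three terms in turn.

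For the large-prime term, $\mathcal{E}_M(\x)$ forces $f(\x)$ and $g(\x)$ to have a common prime factor exceeding $M$, so Theorem~\ref{MT+} (applied with $n-1$ in place of $n$, to the forms $F_1=f$, $F_2=g$) yields $R_M(B)\ll_\ve B^{n-2+\ve}/(M\log M)+B^{n-2-1/(n-1)+\ve}$; choosing $\ve<\min\{\eta,\,1/(2(n-1))\}$ makes $R_M(B)=o(N(B))$. For the small-modulus term, $P_z$ has only $O_z(1)$ divisors, so the circle-method asymptotics above give $\sum_{d\mid P_z}\mu(d)M_d(B)=\bigl(\sum_{d\mid P_z}\mu(d)\omega(d)+o_z(1)\bigr)N(B)=\bigl(\prod_{p\le z}(1-\omega_p)+o_z(1)\bigr)N(B)$. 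For the intermediate term, $\mathcal{E}_{z,M}(\x)$ implies $p\mid f(\x)$ and $p\mid g(\x)$ for some prime $p\in(z,M]$, whence $R_{z,M}(B)\le\sum_{z<p\le M}M_p(B)$, and here I would use a bound
\[
M_p(B)\ll\frac{B^{n-2}}{p^2}+p^{O(1)}B^{n-2-\delta}\qquad(\text{uniformly in the prime }p,\text{ for some fixed }\delta>0),
\]
whose first term dominates throughout $z<p\le B^{\eta}$ once $\eta$ is small enough; this gives $R_{z,M}(B)\ll B^{n-2}\sum_{p>z}p^{-2}\ll B^{n-2}/(z\log z)$, i.e.\ $R_{z,M}(B)\ll N(B)/(z\log z)$ for $B$ large in terms of $z$. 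Putting the three bounds together, $\limsup_{B\to\infty}\bigl|\#(\cR_{f,g}\cap A(B))/N(B)-\prod_{p\le z}(1-\omega_p)\bigr|\ll 1/(z\log z)$; letting $z\to\infty$ then gives the required identity. (The identification of $\mu_{Q,p}(\cR_{f,g})$ with the stated limit over $\ZZ/p^k\ZZ$ is a routine Hensel-lifting computation, using smoothness of $Q=0$ away from the origin at all but finitely many $p$.)

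The one substantial ingredient beyond Theorem~\ref{MT+} is the uniform estimate for $M_p(B)$ used in the intermediate range, and this is the main obstacle. One must count the zeros of $Q$ of height $\le B$ lying in each residue class $\a\bmod p$ with $p\mid f(\a)$ and $p\mid g(\a)$, obtaining the expected main term of size $\asymp B^{n-2}/p^{n-1}$ for each non-singular class --- hence $\asymp B^{n-2}/p^{2}$ after summing over the $\ll p^{n-3}$ relevant classes, the exponent $n-3$ being precisely what the codimension-$3$ hypothesis provides --- and, crucially, \emph{with no factor $B^{\ve}$ on this main term}, only on an error term that saves a power of $B$ and is polynomial in $p$. (With a $B^{\ve}$ present the sum over $z<p\le B^{\eta}$ would blow up, and Theorem~\ref{MT+} itself cannot cover this range for $z$ fixed, for the same reason.) Such a count is obtained by running the circle method for $Q$ --- legitimate since $\rank Q\ge5$ --- over the shifted lattices $\a+p\ZZ^{n}$, with careful attention to the dependence of the error on the modulus; it is the local, prime-by-prime counterpart of the estimate whose summation over $p>M$ underlies Theorem~\ref{MT+}. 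The finitely many primes of bad reduction, at which the reduction of $Z$ may jump in dimension, only affect the value of the constant and are absorbed by taking $z$ larger than all of them.
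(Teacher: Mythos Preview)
Your proposal is correct and follows essentially the same route as the paper: split the primes into three ranges, use inclusion--exclusion over $d\mid P_z$ for the small primes, the uniform circle-method estimate $N(B;p,\a)=c(p,\a)B^{n-2}+O(p^{\Delta}B^{n-2-\delta})$ for the intermediate primes, and Theorem~\ref{MT+} for the large primes. The only cosmetic differences are that the paper takes the lower cut-off $\xi=\sqrt{\log B}$ growing with $B$ (rather than a fixed $z$ sent to infinity at the end) and uses a sandwich inequality in place of your exact identity; and the paper spells out the bound $\omega_p\ll p^{-2}$ via an explicit Hensel/diagonalisation argument (including the contribution of the singular classes $\mathbf{b}\equiv\mathbf{0}\pmod p$, which is where $\rank Q\ge 5$ is used), whereas you defer this to a ``routine Hensel-lifting computation''---correct in spirit, but this is the one place where a reader would want the details filled in.
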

Despite having Theorem \ref{MT++} at our disposal, we prove the
corollary only for the case of rank 5 or more, although it seems
likely that it might be extended to cover the quadratic forms in Theorem
\ref{MT++}.

\medskip

A  closely related consequence of the geometric sieve 
concerns  
``arithmetic purity'' for 
projective quadrics. 
The 
implicit function theorem implies  that weak approximation over  $\QQ$
 is birationally invariant among smooth varieties. 
Let $V$ be a variety defined over $\QQ$
such that  $V(\QQ)\neq \emptyset$.
Strong approximation off $\infty$ is said to hold for $V$ if the diagonal image of the set $V(\QQ)$ of rational points is dense in the the space of finite adeles
$V(\A_\QQ^f)$, equipped with the adelic topology. 
Wittenberg 
\cite[Question~2.11]{W}
has asked whether the property of strong approximation off $\infty$
is invariant among smooth varieties up to a closed subvariety
of codimension at least 2.  We say 
$V$ satisfies ``arithmetic purity'' if strong approximation off $\infty$ holds for $V$ and also for the open subset  $V\setminus Z$, for 
any codimension $2$ subvariety $Z\subset V$.
This property  has been observed to hold for $V=\AA^m$ or $V=\PP^m$, for example, 
by Cao and Xu \cite[Prop.~3.6]{CX}.

Smooth projective quadrics with a rational point are well-known to satisfy strong approximation.
The following result establishes the arithmetic purity property for this class of varieties.

\begin{cor}\label{c:SA}
Let  $m\geq 4$ and let $X\subset \PP^m$ be a smooth  quadric hypersurface defined over 
$\QQ$ such that $X(\QQ)\neq \emptyset$.
For
any codimension two subvariety $Z\subset X$ the variety $X\setminus Z$
satisfies strong  approximation off $\infty$. 
\end{cor}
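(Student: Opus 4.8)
The plan is to deduce Corollary~\ref{c:SA} from Theorem~\ref{MT+} along the by now standard route from strong approximation on a variety to its arithmetic purity (compare \cite{CX,CH}): one combines the strong approximation property of the ambient quadric $X$ itself --- quantitatively, Birch's asymptotic \cite{birch} for the zeros of an isotropic integral quadratic form of rank $\ge 5$ --- with the geometric sieve of Theorem~\ref{MT+} in order to discard the rational points of $X$ whose reduction meets the removed locus modulo some prime.

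Write $U=X\setminus Z$ and let $Q$ be a primitive integral quadratic form of rank $m+1\ge 5$ cutting out $X$; choose homogeneous forms $F_1,\dots,F_r\in\ZZ[X_0,\dots,X_m]$ generating the homogeneous ideal of $Z$, so that $Z=\{Q=F_1=\dots=F_r=0\}$ has codimension $3$ in $\PP^m$, and let $\mathcal{U}=\mathcal{X}\setminus\mathcal{Z}$ be the associated model. Since $X$ is proper we have $X(\QQ)=\mathcal{X}(\ZZ)$, and a primitive $\x\in\ZZ^{m+1}$ with $Q(\x)=0$ lies in $\mathcal{U}(\ZZ_p)$ precisely when $p\nmid\gcd(F_1(\x),\dots,F_r(\x))$. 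Unwinding the definition of strong approximation off $\infty$ for $U$, the first step is to reduce the corollary to the following assertion: \emph{for every finite set of primes $S$ and every choice of $x_p\in U(\QQ_p)$, $p\in S$, together with prescribed $p$-adic precisions, there is a primitive $\x\in\ZZ^{m+1}$ with $Q(\x)=0$, lying in the approximation class of a primitive $\ZZ_p$-lift of $x_p$ for each $p\in S$, and with $\x\bmod p\notin Z_p(\FF_p)$ for every prime $p\notin S$}. Any such $\x$ defines a point of $U(\QQ)$ --- it cannot lie on $Z$, for then its reduction would meet $Z_p$ at \emph{every} $p$ --- which lies in the prescribed adelic neighbourhood of $(x_p)$; here one enlarges $S$ at the outset so that, for $p\notin S$, the chosen adelic component $x_p$ lies in $\mathcal{U}(\ZZ_p)$ and $Q$, the reduction of $Z$ and the inclusion $X_p(\FF_p)\not\subseteq Z_p(\FF_p)$ are all non-degenerate.

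To produce $\x$ I would count. Fix a constant $P_0$ with $\sum_{p>P_0}\delta_p<\tfrac14$, where $\delta_p\ll_Z p^{-2}$ denotes the density of the condition ``$\x\bmod p\in Z_p(\FF_p)$'' among the zeros of $Q$ modulo $p$ (here $\#Z_p(\FF_p)\ll p^{m-3}$, since $Z$ has codimension $2$ in $X$, so $\sum_p\delta_p$ converges); for each prime $p\le P_0$ with $p\notin S$ impose in addition that $\x$ avoid $Z_p$ modulo $p$. All of these conditions, together with the approximation conditions at $S$, form a fixed, $Q$-compatible, non-degenerate set of congruences, so Birch's theorem \cite{birch} gives
\[
\#\{\x\in\ZZ^{m+1}:\ Q(\x)=0,\ |\x|\le B,\ \x\ \text{in the chosen classes}\}\ \sim\ c\,B^{m-1}\qquad(B\to\infty)
\]
with $c>0$, positivity coming from $X(\RR)\neq\emptyset$ and the isotropy of $Q$ over $\QQ$ (so that the singular series converges, rank being $\ge 5$, with positive local factors). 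Now fix a small $\ve>0$ with $\ve<1/m$ and put $M=B^{\ve}$. By Theorem~\ref{MT+} (with $n=m$ and the same $\ve$), the number of $\x$ with $Q(\x)=0$, $|\x|\le B$ for which $\gcd(F_1(\x),\dots,F_r(\x))$ has a prime factor $>M$ is $\ll B^{m-1+\ve}/(M\log M)+B^{m-1-1/m+\ve}\ll B^{m-1}/\log B+B^{m-1-1/m+\ve}=o(B^{m-1})$. For the primes $P_0<p\le M$ with $p\notin S$, a union bound combined with the same circle-method input, now carrying one extra congruence modulo $p$, bounds the number of counted $\x$ with $\x\bmod p\in Z_p(\FF_p)$ for some such $p$ by $\sum_{P_0<p\le M}\big(\delta_p\,c\,B^{m-1}+O(p^{A}B^{m-1-\eta})\big)$ for absolute $A,\eta>0$; the main term is $<\tfrac14 c\,B^{m-1}$ by the choice of $P_0$, while the error totals $\ll B^{m-1-\eta+(A+1)\ve}=o(B^{m-1})$ once $\ve$ is small enough. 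Subtracting the two estimates from $c\,B^{m-1}$ leaves at least $(\tfrac34+o(1))c\,B^{m-1}-o(B^{m-1})$ admissible $\x$, which is positive for $B$ large; any such $\x$ does the job.

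The one genuinely delicate point is the choice of the threshold $M$. The factor $B^{\ve}$ in Theorem~\ref{MT+} forces $M$ to be a small positive power of $B$ rather than a constant, so one cannot simply fold every small-prime condition into a single fixed modulus and quote Birch directly; the primes between $P_0$ and $B^{\ve}$ must be treated individually and summed, uniformly in $p$. This requires exactly two standard ingredients: the convergence of $\sum_p\delta_p$ (used to sequester the finitely many degenerate small primes into $S$ and to keep the tail small), and sufficient uniformity in the modulus for the count of zeros of $Q$ under an auxiliary prime congruence --- both available for forms of rank $\ge 5$, and in fact the counting bounds needed already underpin the proof of Theorem~\ref{MT+}, so no essentially new analytic input is required. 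Conceptually the statement is forced, since deleting a closed subset of codimension $\ge 2$ leaves the Brauer group unchanged and thus introduces no new obstruction to strong approximation; the role of the geometric sieve is precisely to make this effective.
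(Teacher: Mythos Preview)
Your proposal is correct and follows essentially the same route as the paper: reduce strong approximation on $U=X\setminus Z$ to producing a primitive integral zero of $Q$ in a prescribed congruence class modulo a fixed modulus built from $S$, and coprime to $\gcd(F_1(\x),\dots,F_r(\x))$ at all primes outside $S$; then count such zeros by combining the circle-method asymptotic \eqref{eq:asymptotic} with Theorem~\ref{MT+} and show the count is positive for large $B$.

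The one noteworthy difference is in the handling of the small-prime threshold. The paper's Section~\ref{s:coprime} (which the proof of Corollary~\ref{c:SA} invokes verbatim) establishes a full asymptotic formula, and for that purpose lets the threshold $\xi=\sqrt{\log B}$ tend to infinity with $B$. You instead fix an absolute constant $P_0$ with $\sum_{p>P_0}\delta_p<\tfrac14$ and fold the finitely many primes $p\le P_0$ into the congruence data. This is a legitimate simplification, available precisely because Corollary~\ref{c:SA} only requires \emph{existence} of a suitable $\x$, not an exact leading constant; your version avoids the inclusion--exclusion over $q\mid P_\xi$ and the attendant error-term bookkeeping. Conversely, the paper's $\xi\to\infty$ argument is what is needed to recover the product formula in Corollary~\ref{cor:gcd}, so it does double duty. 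Your remark that the threshold $M$ must be a small power of $B$ (rather than a constant) because of the $B^\ve$ loss in Theorem~\ref{MT+} is exactly right and matches the paper's choice $M=B^{\delta/(2(n+1+\Delta))}$.
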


The proof of this result is given in Section \ref{s:SA}. In fact 
Corollary \ref{c:SA} follows rather  easily by adapting 
the proof of Lemma 1.8 in 
  work of Harpaz and Wittenberg \cite{HW}.
     (To be precise, one replaces 
$\AA^n$ by the  quadric $X$ and one replaces the line $L$  passing through $Q$ and $Q'$ by a conic which arises from intersecting $X$ with a plane passing through  $Q$ and $Q'$.) 
We have chosen to include 
Corollary \ref{c:SA} 
 in order to illustrate the scope of the geometric sieve.

 \medskip

Our final application concerns local solubility for families of varieties. 
Recall that a scheme over a perfect
field is said to be split if it contains a geometrically integral open subscheme.
Suppose one has a  family $Y\to X$ of varieties over $\QQ$. A conjecture of Loughran 
 \cite[Conj.~1.7]{Dan} states that under suitable hypotheses, when ordered by height, a positive proportion of the fibres 
have adelic points  if and only if the morphism is split in codimension 1. This is established when $X=\PP^m$ in \cite[Thm.~1.3]{wa}.
The following result confirms the conjecture when $X$ is a   quadric hypersurface of large enough rank.

\begin{cor}\label{c:codim}
Let $X\subset \PP^m$ be a hypersurface defined over $\QQ$ by an indefinite quadratic form of rank at least $5$. 
Let $\pi:Y \to X$ be a dominant quasi-projective $\QQ$-morphism, 
with geometrically
	integral  generic fibre.
 Assume that:
\begin{enumerate}
\item  the fibre of $\pi$ over each  
	codimension-$1$ point of $X$  is split; 
\item $V(\mathbf{A}_\QQ)\neq \emptyset$.
\end{enumerate}
Then the limit
$$
\sigma(\pi)=\lim_{B\to \infty} \frac{
\#\left\{x\in X(\QQ): H(x)\leq B, ~\pi^{-1}(x)(\mathbf{A}_\QQ)\neq \emptyset \right\}
}{\#\left\{x\in X(\QQ): H(x)\leq B \right\}
}
$$
exists, and it is equal to a positive product of local densities. 
\end{cor}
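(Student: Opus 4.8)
The plan is to follow the strategy of Bright, Browning and Loughran \cite{wa}, which establishes the analogous statement for $X=\PP^m$, replacing the geometric sieve over $\PP^m$ used there by Theorem \ref{MT*}. Write $d=\dim X=m-1$, so that $\#\{x\in X(\QQ):H(x)\le B\}=c_XB^d+o(B^d)$ by Birch \cite{birch}, and fix once and for all a primitive integral representative for each point of $X(\QQ)$. Everything will rest on the quantitative form of Birch's theorem available for a quadratic form $Q$ of rank at least $5$: there are constants $A,\delta,\kappa_0>0$ with
\[
\#\{\x\in\ZZ^{m+1}:Q(\x)=0,\ 0<|\x|\le B,\ \x\equiv\a\bmod q\}=\varrho_q(\a)\,c_XB^d+O\bigl(q^AB^{d-\delta}\bigr)
\]
uniformly for $q\le B^{\kappa_0}$ and all classes $\a\bmod q$, where $\varrho_q(\a)\in[0,1]$ is the local density of the class $\a$, with $\sum_{\a\bmod q}\varrho_q(\a)=1$; this is supplied by the circle method for quadrics. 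For each place $v$ of $\QQ$ set
\[
\omega_v=\frac{\mu_{X,v}\bigl(\{x\in X(\QQ_v):\pi^{-1}(x)(\QQ_v)\neq\emptyset\}\bigr)}{\mu_{X,v}\bigl(X(\QQ_v)\bigr)}\in[0,1],
\]
where $\mu_{X,v}$ is the local density measure attached to $Q$; this is well defined because the locally soluble locus is open up to a set of measure zero, its boundary lying in the image of the non-smooth locus of $\pi$. The assertion to prove is that $\sigma(\pi)=\omega_\infty\prod_p\omega_p$, a convergent product of positive reals.

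The geometric input is a structural lemma in the style of Loughran \cite{Dan}. Since the generic fibre of $\pi$ is geometrically integral it is split, so the non-split locus of $\pi$ is a proper closed subvariety $Z\subsetneq X$, defined over $\QQ$; by hypothesis (1) it contains no codimension-$1$ point of $X$, whence $\operatorname{codim}(Z,X)\ge2$ and therefore $\operatorname{codim}(Z,\PP^m)\ge3$. Combining the Lang--Weil estimates with Hensel's lemma produces a finite set $S$ of primes such that, for every $p\notin S$, any $x\in X(\QQ_p)$ whose representative reduces modulo $p$ to a point of $X(\FF_p)\setminus Z_p(\FF_p)$ satisfies $\pi^{-1}(x)(\QQ_p)\neq\emptyset$; equivalently, for $p\notin S$, failure of $p$-adic solubility forces the representative of $x$ to specialise into $Z_p(\FF_p)$ --- precisely the event controlled by Theorem \ref{MT*}. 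This also gives $\omega_p=1+O(p^{-2})$ for $p\notin S$, so $\prod_p\omega_p$ converges, and positivity of each $\omega_v$ follows from hypothesis (2) by the standard argument exploiting geometric integrality of the generic fibre.

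The core of the proof is a three-range decomposition of the primes. Fix a parameter $p_1$ large enough that $S\subseteq\{p:p\le p_1\}$ and every prime of bad reduction of $\mathcal{Z}$ is at most $p_1$, and a sufficiently small constant $\kappa\in(0,\kappa_0)$ (so small that the error terms accumulated below are $o(B^d)$); put $\tau_{p_1}=\omega_\infty\prod_{p\le p_1}\omega_p$. Discarding all conditions at $p>p_1$ gives the upper bound
\[
\#\{x\in X(\QQ):H(x)\le B,\ \pi^{-1}(x)(\mathbf{A}_\QQ)\neq\emptyset\}\le\#\{x:H(x)\le B,\ \pi^{-1}(x)(\QQ_v)\neq\emptyset\text{ for }v=\infty\text{ and }p\le p_1\},
\]
whose right-hand side equals $\tau_{p_1}c_XB^d+O_{p_1}(B^{d-\delta})$ by the equidistribution statement applied to the fixed modulus (a suitable power of $\prod_{p\le p_1}p$) capturing these conditions up to negligible error. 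For the lower bound, since $\bar x\bmod p\notin Z_p$ for every $p>p_1$ forces $p$-adic solubility at all such $p$, the left-hand side is at least the same count minus $\#\{x:H(x)\le B,\ \bar x\bmod p\in Z_p\text{ for some }p>p_1\}$; we estimate the latter in two ranges. For $p_1<p\le B^\kappa$ a union bound together with the equidistribution estimate (valid as $p\le B^{\kappa_0}$) contributes $\ll B^d\sum_{p>p_1}p^{-2}+o(B^d)\ll B^d/(p_1\log p_1)+o(B^d)$, and for $p>B^\kappa$ Theorem \ref{MT*} applied with $M=B^\kappa$ contributes $\ll_\ve B^\ve\bigl(B^{d-\kappa}/\log B+B^{d-1/m}\bigr)$, which is $o(B^d)$ once $\ve<\min(\kappa,1/m)$. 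Dividing by $c_XB^d$ and letting $B\to\infty$ with $p_1,\kappa,\ve$ fixed therefore yields
\[
\tau_{p_1}\ \ge\ \limsup_{B\to\infty}\frac{\#\{x\in X(\QQ):H(x)\le B,\ \pi^{-1}(x)(\mathbf{A}_\QQ)\neq\emptyset\}}{c_XB^d}\ \ge\ \liminf_{B\to\infty}(\cdots)\ \ge\ \tau_{p_1}-O\Bigl(\tfrac1{p_1\log p_1}\Bigr).
\]
Letting $p_1\to\infty$, the quantity $\tau_{p_1}$ decreases to $\omega_\infty\prod_p\omega_p>0$ while the error term vanishes, so the limit defining $\sigma(\pi)$ exists and equals $\omega_\infty\prod_p\omega_p$.

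Every ingredient beyond Theorem \ref{MT*} is a transcription of \cite{wa} with $\PP^m$ replaced by the quadric $X$. The delicate point, and the reason for the three-range decomposition, is that the factor $B^\ve$ in Theorem \ref{MT*} permits one to invoke it only with $M$ a genuine positive power of $B$; the intermediate primes $p_1<p\le B^\kappa$ must therefore be removed by an elementary sieve whose tail is controlled only by the power saving $B^{-\delta}$ in the quantitative equidistribution of the zeros of $Q$. Securing that power saving --- and Theorem \ref{MT*} itself --- is where the hypothesis $\rank Q\ge5$ is indispensable.
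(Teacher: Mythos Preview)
Your argument is correct and follows essentially the same route as the paper: the structural input from \cite{wa} reducing to a codimension-$2$ locus $Z\subset X$, equidistribution of zeros of $Q$ in residue classes for intermediate primes, and Theorem~\ref{MT*} (equivalently Theorem~\ref{MT+}) for the large-prime tail. The paper packages the limit more measure-theoretically in the style of Poonen--Stoll, introducing functions $f_{M,M'}(B)$ and verifying Cauchy's criterion for the infinite product, whereas you give a direct sandwich with $p_1\to\infty$; these are cosmetic differences. One point you gloss over is that the conditions $\pi^{-1}(x)(\QQ_v)\neq\emptyset$ for $v=\infty$ and $p\le p_1$ are not literally congruence conditions modulo a single modulus, and the paper spends a paragraph approximating them by finite unions of boxes via a measure-zero-boundary argument; but this is a standard step and your phrase ``up to negligible error'' covers it.
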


This will be established in Section \ref{s:Ekedahl}, where an explicit  value for  $\sigma(\pi)$ is also recorded.

\subsection*{Acknowledgements}
The authors were inspired to work on this problem following 
discussions at the AIM workshop ``Rational and integral points on
higher-dimensional varieties'' in May, 2014.
They would particularly like to thank David Harari and Olivier 
Wittenberg for their patient explanations of the issues involved 
with the geometric sieve for quadrics.  The authors are also grateful to Julian Lyczak and Olivier Wittenberg for further useful comments. 
During the preparation of this article the first-named author was
supported by EPSRC grant EP/P$026710/1$ 
and FWF grant P 32428-N35.

\section{The geometric sieve for affine space}

We shall reduce the proof of 
Theorem \ref{MT*} to an application of the usual geometric sieve for
affine space. 
However, it will be important  to have a version of
 \cite[Thm.~3.3]{sieve} in  which
the dependence on the coefficients of all the polynomials  is made
explicit and, furthermore,  the variables are allowed to run over a
lopsided box.

Given $B_1,\dots, B_n\geq 1$, it will be convenient to set 
$$
V=\prod_{1\leq i\leq n} B_i
$$
and 
$$
B_{\min}=\min(B_1,\dots,B_n).
$$
We shall adhere to this notation throughout this section, the main
result of which is the  following. 

\begin{lemma}\label{t:ekedahl}
Let $B_1,\dots, B_n,H,M\geq 2$ and let
$f_1,\dots,f_r\in\ZZ[X_1,\dots,X_n]$ be polynomials with no common
factor in the ring $\ZZ[X_1,\dots,X_n]$, and having degrees at most $d$
and heights at most $H$.  Then 
 \begin{align*}
 \#\{\x\in \ZZ^n: |x_i|\leq B_i &\text{ for $i\leq n$}, ~\exists~p>M,\,
 p\mid f_j(\x) \text{ for $j\leq r$} \}  \\
 &\quad \ll 
 \frac{V\log (VH)}{M\log M} + \frac{V \log (VH)}{B_{\min}}, 
 \end{align*}
 where the implied constant is only  allowed to depend  on  $d$ and
 $n$ (and is independent of $r$).
\end{lemma}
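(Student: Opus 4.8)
The plan is to follow the template of Ekedahl and Bhargava \cite{ekedahl, sieve}, splitting the primes $p>M$ into a ``small'' range $M<p\le B_{\min}$ and a ``large'' range $p>B_{\min}$, while carrying the dependence on $H$ and on the shape of the box through every estimate. Two reductions come first. Whether $f_1,\dots,f_r$ have a common factor depends only on their $\QQ$-linear span in $\QQ[X_1,\dots,X_n]_{\le d}$, whose dimension is at most $\binom{n+d}{n}$; and any $\x\in\ZZ^n$ with $p\mid f_j(\x)$ for all $j\le r$ satisfies $p\mid g(\x)$ for every integer polynomial $g$ in that span. Hence I may keep only a maximal $\QQ$-linearly independent subset of the $f_j$, reducing to $r\ll_{d,n}1$. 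Secondly, since this span has no common factor, the ideal it generates in $\QQ[X_1,\dots,X_n]$ has height $\ge2$; consequently there are two $\ZZ$-linear combinations $h_1,h_2$ of the $f_j$ --- with coefficients of size $O_{d,n}(1)$, by a Schwartz--Zippel argument --- having no common factor in $\QQ[X_1,\dots,X_n]$, and after dividing each $h_i$ by its content one may assume $h_1,h_2$ are primitive (so no prime divides all coefficients of $h_i$), of degree $\le d$, of height $\ll_{d,n}H$, and with no common factor in $\ZZ[X_1,\dots,X_n]$. It now suffices to bound the number of $\x$ with $|x_i|\le B_i$ for all $i$ for which $p\mid h_1(\x)$ and $p\mid h_2(\x)$ for some prime $p>M$.

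For a small prime $M<p\le B_{\min}$, partitioning the box into residue classes modulo $p$ shows that the number of such $\x$ in the box is at most $\prod_{i=1}^{n}\lceil(2B_i+1)/p\rceil$ times $N_p:=\#\{\a\in\FF_p^n:h_1(\a)=h_2(\a)=0\}$, which is $\ll_n (V/p^n)N_p$ because $B_i\ge p$ throughout this range. There is a nonzero integer $D$, a suitable resultant, with $\log D\ll_{d,n}\log H$ such that for $p\nmid D$ the reductions of $h_1,h_2$ are coprime in $\FF_p[X_1,\dots,X_n]$; then $\{h_1=h_2=0\}$ over $\FF_p$ has dimension $\le n-2$, so $N_p\ll_{d,n}p^{n-2}$ by B\'ezout, and summing over these $p$ contributes $\ll_{d,n}V\sum_{p>M}p^{-2}\ll V/(M\log M)$. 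For the at most $\log D/\log M$ primes $p>M$ dividing $D$ I use instead that $p$ does not divide all coefficients of $h_1$, so $N_p\ll_{d,n}p^{n-1}$ and each such $p$ contributes $\ll_{d,n}V/p\le V/M$, a total of $\ll_{d,n}V\log H/(M\log M)$. Together these give the first term.

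The large primes are the crux. Relabel the variables so that $h_1$ genuinely involves $X_n$ (if $h_1$ is constant then, being primitive, $h_1=\pm1$ and the count is zero), and set $\rho:=\mathrm{Res}_{X_n}(h_1,h_2)\in\ZZ[X_1,\dots,X_{n-1}]$; since $h_1,h_2$ are $\QQ$-coprime with $\deg_{X_n}h_1\ge1$ one has $\rho\neq0$, with $\deg\rho\ll_{d,n}1$, height at most $H^{O_{d,n}(1)}$, and $\rho=Ah_1+Bh_2$ for some $A,B\in\ZZ[X_1,\dots,X_n]$. Writing $\bar\x=(x_1,\dots,x_{n-1})$, every $\x$ in the box with $p\mid h_1(\x)$ and $p\mid h_2(\x)$ has $p\mid\rho(\bar\x)$. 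If $\rho(\bar\x)\neq0$, then $\log|\rho(\bar\x)|\ll_{d,n}\log(VH)$, so $p$ ranges over the $\ll_{d,n}\log(VH)/\log B_{\min}$ prime divisors of $\rho(\bar\x)$ exceeding $B_{\min}$; for each such $p$ the admissible $x_n$ form at most $d$ residue classes modulo $p$ (the roots of $h_1(\bar\x,X_n)$ reduced modulo $p$, a nonzero polynomial off a degenerate locus), each with $\ll\lceil(2B_n+1)/p\rceil$ lifts into $[-B_n,B_n]$, and carrying out the resulting sum over $p$ using $\sum_{p\mid\rho(\bar\x),\,p>B_{\min}}p^{-1}\ll_{d,n}\log(VH)/(B_{\min}\log B_{\min})$ bounds the number of admissible $x_n$ by $\ll_{d,n}\log(VH)/\log B_n+B_n\log(VH)/(B_{\min}\log B_{\min})$. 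Summing over the $\ll_n V/B_n$ values of $\bar\x$, and using $B_n\log B_n\gg B_{\min}$ and $B_{\min}\log B_{\min}\gg B_{\min}$, this is $\ll_{d,n}V\log(VH)/B_{\min}$. If $\rho(\bar\x)=0$, then $\bar\x$ lies on the hypersurface $\{\rho=0\}\subset\AA^{n-1}$ of dimension $\le n-2$; the standard lattice-point estimate for a bounded-degree hypersurface gives $\ll_{d,n}V/(B_nB_{\min})$ such $\bar\x$ in $\prod_{i<n}[-B_i,B_i]$, and allowing all $x_n$ costs a further factor $2B_n+1$, a total of $\ll_{d,n}V/B_{\min}$. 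Hence the large primes contribute $\ll_{d,n}V\log(VH)/B_{\min}$, which is the second term.

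The step I expect to be hardest is the large-prime analysis: one must pick the eliminated variable and organise the sum over the prime divisors of $\rho(\bar\x)$ so that the lopsidedness of the box is fully exploited, and one must keep every resultant's height explicit so that all exceptional primes --- the divisors of the contents, of $D$, and of the various leading coefficients --- contribute no more than the error allowed. In particular one must verify that the degenerate loci suppressed above, where $h_1(\bar\x,X_n)$ or some coefficient polynomial of $\rho$ vanishes identically, really are of lower order; this is routine but it is where the bookkeeping lives.
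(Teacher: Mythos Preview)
Your outline is sound and would yield the stated bound, but the route is genuinely different from the paper's, and the differences are worth noting.

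\textbf{Reduction to two polynomials.} You pass to a linearly independent subset and then pick two coprime $\ZZ$-linear combinations. The paper instead factors $f_1$ into irreducibles $g_1\cdots g_k$, bounds $\cN(f_1,\dots,f_r)\le\sum_j\cN(g_j,f_2,\dots,f_r)$, and for each $j$ keeps one $f_i$ not divisible by $g_j$. Both are fine; the paper's version uses the Gelfond-type height inequality $H(u)H(v)\ll_{n,d}H(uv)$ to control the heights of the factors.

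\textbf{Handling the lopsided box.} This is the main structural difference. You keep the lopsided box throughout and split primes at $B_{\min}$, treating $M<p\le B_{\min}$ by mod-$p$ counting and $p>B_{\min}$ by the resultant argument. The paper instead \emph{reduces immediately to the cube} $B_1=\dots=B_n=B$: writing $k=[B_{\min}]$ and $x_i=y_i+kh_i$ with $0\le y_i<k$ and $|h_i|\ll B_i/B_{\min}$, one gets $O_n(V/B_{\min}^n)$ translates, each a cube of side $k$, and the cube case (with height inflated to $O(HV^d)$) gives back exactly the required bound after summing over $\b{h}$. After this reduction the paper does \emph{not} split primes at all; it treats every $p>M$ uniformly via the resultant, using $2B/p+1\le 2B/M+1$.

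\textbf{Avoiding degenerate loci.} Your final paragraph correctly identifies the delicate point: controlling the $(\bar\x,p)$ for which $h_1(\bar\x,X_n)$ vanishes identically modulo $p$, and the small-prime analogue of producing the integer $D$ that detects when $h_1,h_2$ acquire a common factor in $\FF_p[X]$. These are doable but genuinely fiddly. The paper sidesteps both issues in one stroke: after reducing to the cube, it makes a further linear change of variables $Y_1=X_1$, $Y_i=a_1X_i-a_iX_1$ (with $|\b{a}|\ll_{n,d}1$ chosen so that the leading form of $f_1f_2$ does not vanish at $\b{a}$) to arrange that $f_1f_2$ has a nonzero term $cX_1^e$ with $c$ a \emph{constant}. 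Then for every $p\nmid c$ the one-variable polynomial $f_1(X_1,x_2,\dots,x_n)$ has full degree modulo $p$, so there is no degenerate locus to track, and the only exceptional primes are the $O(\log H/\log M)$ divisors of $c$. This single device replaces both your integer $D$ and your deferred bookkeeping.

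In short: your approach works, but the paper's tiling-to-a-cube plus constant-leading-coefficient manoeuvre buys a cleaner endgame with no case analysis for vanishing leading terms and no need to quantify when coprimality survives reduction mod $p$.
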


Here the height $H(f)$  
of a polynomial $f$ is defined as the maximum of
the moduli of its coefficients.

One recovers a version of 
 \cite[Thm.~3.3]{sieve} by taking $B_1=\dots=B_n$ and by absorbing $H$
 into the implied constant. 
 The proof is a  minor modification of the proof of 
 \cite[Thm.~3.3]{sieve}, but we shall give full details for the sake
 of completeness.   

We begin the proof with an easy lemma.

\begin{lemma}\label{A}
Let  $f\in\ZZ[X_1,\dots,X_n]$ be a non-zero polynomial of degree $d$,
and let $B\geq 1$.  Then
\[\#\left\{\x\in \ZZ^n\cap[-B,B]^n: f(\x)=0
\right\}\leq nd(2B+1)^{n-1} .\]

Moreover, if $p$ is a prime which does not divide $f$ identically, then 
\[\#\left\{\x\in \ZZ^n\cap [-B,B]^n: p\mid f(\x)
\right\}\leq nd(2B/p+1)(2B+1)^{n-1}\]
and
\[\#\left\{\x\in \ZZ^n\cap (0,p]^n: p\mid f(\x)
\right\}\leq ndp^{n-1}.\]
\end{lemma}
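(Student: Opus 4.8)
The plan is to prove all three inequalities at once by induction on the number of variables $n$, the key elementary input being that a non-zero univariate polynomial of degree at most $d$ over a field has at most $d$ roots; I would apply this over $\QQ$ for the first bound and over $\FF_p$ for the other two. In the base case $n=1$ a non-zero $f\in\ZZ[X_1]$ of degree at most $d$ has at most $d$ integer zeros, giving the first bound; and if $p\nmid f$ then $f$ has at most $d$ roots modulo $p$, so, since each residue class modulo $p$ contains at most $2B/p+1$ integers of $[-B,B]$ and exactly one integer of $(0,p]$, the second and third bounds follow.

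For the inductive step I would write $f=\sum_{i=0}^{e}c_i(X_1,\dots,X_{n-1})X_n^{i}$ with $e\le d$ and $\deg c_i\le d-i$. For the first bound let $j$ be the largest index with $c_j$ not the zero polynomial; for the other two let $j$ be the largest index with $c_j\not\equiv 0\pmod p$, which is well defined precisely because $p$ does not divide $f$ identically. In all three cases $c_j$ is a polynomial in $n-1$ variables of degree at most $d$, to which the inductive hypothesis applies. I would then split the count according to the value of $\x'=(x_1,\dots,x_{n-1})$. If $c_j(\x')\neq 0$ (respectively $\not\equiv 0\pmod p$), then $f(\x',X_n)$ is a univariate polynomial of degree exactly $j\le d$ (respectively non-zero of degree at most $d$ modulo $p$), so the base case applied in the $X_n$-direction bounds the number of admissible $x_n$ by $d$, by $d(2B/p+1)$, and by $d$ in the three settings; multiplying by the at most $(2B+1)^{n-1}$, respectively $p^{n-1}$, choices of $\x'$ gives a contribution within the required bound. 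If instead $c_j(\x')=0$ (respectively $\equiv 0\pmod p$), then the inductive hypothesis applied to $c_j$ bounds the number of such $\x'$ by $(n-1)d(2B+1)^{n-2}$, by $(n-1)d(2B/p+1)(2B+1)^{n-2}$, and by $(n-1)dp^{n-2}$ respectively, and I would bound the number of $x_n$ trivially by $2B+1$, by $2B+1$, and by $p$. Summing the two contributions turns the factors $d$ and $(n-1)d$ into $nd$, which is exactly the claimed estimate in each case.

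I do not anticipate any serious obstacle: this is a routine ``remove one variable'' induction. The only places needing a little care are the well-definedness of the leading index $j$ in the modular statements, which is the one spot in the inductive step where the hypothesis that $p$ does not divide $f$ identically is invoked, and the elementary counting fact that an arithmetic progression with common difference $p$ meets $[-B,B]$ in at most $2B/p+1$ integers and meets the complete residue system $(0,p]$ in exactly one integer. (One could alternatively derive the first bound from the second by choosing a prime $p$ not dividing the content of $f$ and discarding the harmless factor $2B/p+1\le 2B+1$, but the uniform induction seems cleanest.)
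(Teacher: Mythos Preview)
Your proposal is correct and follows essentially the same induction-on-$n$ argument as the paper: both isolate the leading coefficient $c_j$ (the paper's $f_0$) of $f$ with respect to one variable, split according to whether $c_j(\x')$ vanishes (resp.\ vanishes modulo $p$), apply the inductive hypothesis to $c_j$ in the bad case, and use the univariate root bound in the good case. The only cosmetic difference is that the paper first selects a variable that genuinely occurs in $f$, whereas your version works uniformly because the case $j=0$ is already handled by the same split.
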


\begin{proof}
 The first assertion may be proved by induction on $n$, there being at
most $d$ zeros when $n=1$. For general $n$ suppose that $x_i$ is a
variable that genuinely occurs in $f(\x)$.  
With no loss of generality we may suppose that $i=n$ and that  
$x_n^e$ occurs as
$x_n^ef_0(x_1,\dots,x_{n-1})$ for some exponent $e\le d$, with
$f_0$ not vanishing identically.  By our induction assumption there
are at most 
\[(n-1)d(2B+1)^{n-2}\]
vectors $(x_1,\dots,x_{n-1})\in \ZZ^{n-1}\cap[-B,B]^{n-1}$
which are zeros of $f_0$.
For each of these, there are at most $2B+1$ choices for  $x_n$.
Next, there are at most $(2B+1)^{n-1}$
 choices of $(x_1,\dots,x_{n-1})$ which are not zeros of $f_0$, 
 and for each of these there are at most $d$ possible
values for $x_n$.  The total number of solutions is thus at most
\[ (n-1)d(2B+1)^{n-1}+d(2B+1)^{n-1}= nd(2B+1)^{n-1}.\]
This completes the induction step.

For the second assertion we argue similarly, supposing that $x_n^e$
occurs in $f$ as $x_n^ef_0(x_1,\dots,x_{n-1})$ with $f_0$ not 
identically divisible by $p$. The argument then proceeds as before,
except that now a non-trivial polynomial congruence in one variable
$x$, of degree at most $d$, has at most $d(2B/p+1)$ solutions modulo $p$ in the
interval $[-B,B]$. The final claim is proved similarly, a one-variable
congruence having at most $d$ solutions.
\end{proof}

We now start the proof of Lemma \ref{t:ekedahl}. When $r=1$ the
coprimality condition means that $f_1$ must be constant, equal to
$\pm 1$. In this case there can never be a prime $p>M$ dividing
$f_1$.  We may therefore assume from now on that $r$ is at least
2, and our first move is to show that it suffices to take $r=2$.
Let us temporarily write $\cN(f_1,\dots,f_r)$ for the counting
function in Lemma \ref{t:ekedahl}.  If $f_1$ factors into irreducibles
as $g_1\dots g_k$ over $\ZZ[X_1,\dots,X_n]$ one sees that $k\le d$ and
\[\cN(f_1,\dots,f_r)\le\sum_{j=1}^k\cN(g_j,f_2,\dots,f_r).\]
Each polynomial $g_j$ will have degree at most $d$.  Moreover, for
any polynomials $u,v \in \RR[X_1,\dots,X_n]$ 
with degree at most
$d$ one has 
\[H(u)H(v)\ll_{n,d}H(uv),\]
by Prasolov  \cite[Section 4.2.4]{prasolov}, for example. It follows that
$H(g_j)\ll_{n,d}H$, and one then sees that it will suffice to prove
the lemma in the case in which $f_1$ is irreducible. With this
latter assumption the coprimality condition shows that not all of
$f_2,\dots,f_r$ can be divisible by $f_1$.  We suppose without loss
of generality that $f_1\nmid f_2$, and note that
\[\cN(f_1,\dots,f_r)\le\cN(f_1,f_2),\]
with $f_1$ and $f_2$ coprime. Thus it suffices to prove the lemma in
the case $r=2$, as claimed.

We proceed to make a further simplification, reducing to the case in
which $B_1=\dots=B_n$.  To achieve this, set $k=[B_{\min}]$. Then
if $|x_i|\le B_i$ we may write $x_i=y_i+kh_i$ with $0\le y_i<k$ and
$|h_i|\le 1+B_i/k\ll B_i/B_{\min}$. We set
$f_i(\b{Y};\b{h})=f_i(\b{Y}+k\b{h})$, and observe that these will
be coprime as polynomials in
$\b{Y}$, for any fixed $\b{h}$.  Moreover they will
have height at most $O_{d,n}(HV^d)$.  Thus if we have proved Lemma
\ref{t:ekedahl} in the case $B_1=\dots=B_n(=k)$, we may deduce that
the number of acceptable vectors $\y$ corresponding to a given choice
of $\b{h}$ will be
\[\ll_{n,d}\frac{k^n\log(VH)}{M\log M}+\frac{k^n\log(VH)}{k}.\]
Since there are $O_{n,d}(VB_{\min}^{-n})$ choices for $\b{h}$ we then
recover the required bound for general lopsided values of the $B_i$.

For the remainder of the proof we may now assume that
$B_1=\dots=B_n=B$, say, so that we need to prove that the number of
suitable $\x$ is
\beql{n2}
\ll_{n,d}\frac{B^n\log (BH)}{M\log M} + B^{n-1}\log (BH).
\eeq

We have one further manoeuvre to perform
before reaching the crux of the proof, and that is to show that we may
assume that if $f_1f_2$ has total degree $e(\le 2d)$ then $f_1f_2$ contains a
non-zero term in $X_1^e$.  (Hence both $f_1$ and $f_2$ will
contain monomials in $X_1$ of the maximum possible degrees.)
To show this, let $F(\b{X})$ be the homogeneous
part of $f_1(\b{X})f_2(\b{X})$ of degree $e$.
According to Lemma \ref{A}, the form $F$ has at most $ne(2K+1)^{n-1}$ zeros with
$|\x|\le K$. Taking $K=ne$ we deduce that there is a non-zero integer vector
$\b{a}$ with $F(\b{a})\not=0$, having size $|\b{a}|\le ne$. Without
loss of generality we will suppose that $a_1\not=0$. We now define
variables $Y_i$ by setting $Y_1=X_1$, and $Y_i=a_1X_i-a_iX_1$ for
$2\le i\le n$. We then have $a_1X_1=a_1Y_1$, and $a_1X_i=a_iY_1+Y_i$
for $2\le i\le n$. Then $a_1^df_j(\b{X})$ may be written as $g_j(\b{Y})$ 
say, for $j=1,2$, with $H(g_j)\ll_{d,n} H$. Moreover the coefficient
of $Y_1^e$ in $g_1g_2$ will be $a_1^{2d-e}F(\b{a})\not=0$. We also see
that $\b{y}$
is an integer vector whenever $\x$ is, and that $|\y|\ll_{n,d}B$
whenever $|\x|\le B$.  The linear transform connecting $\b{X}$ and $\b{Y}$
has determinant $a_1^{n-1}$, so that any constant factors of $g_1(\b{Y})$ or
$g_2(\b{Y})$ must have prime factors dividing $a_1$.  These may
safely be removed, since Lemma \ref{t:ekedahl} is trivial when
$M\ll_{n,d}1$. We then see that it suffices to prove the lemma for the
polynomials $g_1$ and $g_2$.
\medskip

We now proceed with the proof, under the assumption that
\[B_1=\dots=B_n=B,\]
and that $f_1f_2$ has a non-zero term,
$cX_1^e$ say, where $e$ is the total degree of $f_1f_2$.
We begin by considering the case in which
there is a prime $p>M$ dividing both $f_1(\x)$ and $f_2(\x)$ and for which
$p\mid c$. Since $c\ll_{n,d}H^2$, the number of such primes is
$O_{n,d}(\log H/\log M)$. 
It is not possible for
both $f_1(\b{X})$ and $f_2(\b{X})$ to vanish 
modulo $p$, since we have assumed that $f_1$ and $f_2$
have no constant factor. Assume without loss of generality that 
$f_1(\b{X})$ does not vanish modulo $p$. 
We may therefore apply Lemma \ref{A}, which shows that 
the number of possible
$\x$ for which $p\mid f_1(\x)$ will be
\[\ll_{n,d}(B/p+1)B^{n-1}\ll B^nM^{-1}+B^{n-1}.\]
This is satisfactory for (\ref{n2}), since the number of available
  primes is 
  \[\ll_{n,d}\frac{\log H}{\log M}.\]

We next consider primes which do not divide $c$. Let
$R(X_2,\dots,X_n)$ be the resultant $\mathrm{Res}_{X_1}(f_1,f_2)$ of $f_1$
and $f_2$ with
respect to $X_1$. Since $f_1$ and $f_2$ are coprime over
$\ZZ[X_1,\dots,X_n]$ this resultant cannot vanish identically.
If $f_1$ and $f_2$ have degrees $d_1$ and $d_2$ with
respect to $X_1$ this resultant is 
given by the determinant of a $(d_1+d_2)\times (d_1+d_2)$ matrix, 
whose entries are polynomials
in $X_2,\dots,X_n$, of height $O_{n,d}(H)$ and degree at most
$d$. Thus $R$ has degree at most $2d^2$ and height
$H(R)\ll_{n,d}H^{2d}$.  Moreover, for any choice
$(x_2,\dots,x_n)\in\ZZ^{n-1}$, the 1-variable
polynomials $f_1(X_1,x_2,\dots,x_n)$ and $f_2(X_1,x_2,\dots,x_n)$
have a common factor modulo $p$ if and only if 
$p\mid R(x_2,\dots,x_n)$. Note that for us to draw this conclusion we
need to observe that the 1-variable
polynomials $f_1(X_1,x_2,\dots,x_n)$ and $f_2(X_1,x_2,\dots,x_n)$ still
have degrees $d_1$ and $d_2$ when considered modulo $p$, because
$p\nmid c$. There are now two alternative situations to
consider. Firstly, it could happen that $R(x_2,\dots,x_n)=0$.
According to Lemma \ref{A} there are at most $O_{n,d}(B^{n-2})$
possible solutions $(x_2,\dots,x_n)\in\ZZ^{n-1}$ in the cube
$[-B,B]^{n-1}$. For each of these there are at most $2B+1$
possibilities for $x_1$, making $O_{n,d}(B^{n-1})$ in total.  This
is acceptable for (\ref{n2}).  In the alternative case we have
$R(x_2,\dots,x_n)\not=0$. If $p$ divides both
$f_1(x_1,x_2,\dots,x_n)$ and $f_2(x_1,x_2,\dots,x_n)$ then the
1-variable polynomials $f_1(X_1,x_2,\dots,x_n)$ and
$f_2(X_1,x_2,\dots,x_n)$ have a common root modulo $p$, namely
$x_1$. We must therefore have $p\mid R(x_2,\dots,x_n)$.  Since $R$
has degree at most $2d^2$ and height $O_{n,d}(H^{2d})$, 
we have
\[0<|R(x_2,\dots,x_n)|\ll_{n,d}B^{2d^2}H^{2d}.\]
It follows that the number of primes $p>M$ which can divide
$R(x_2,\dots,x_n)$ is $O_{n,d}((\log BH)/(\log M))$. Given
$x_2,\dots,x_n$, and given a prime $p\mid R(x_2,\dots,x_n)$, there
are at most $2B/p+1\le 2B/M+1$ integers $x_1\in[-B,B]$ for which
$p$ divides $f_1(x_1,x_2,\dots,x_n)$, by Lemma \ref{A}. Here we note that
the 1-variable polynomial $f_1(X_1,x_2,\dots,x_n)$ does not
vanish modulo $p$, since $p\nmid c$.  We now deduce that there are
\[\ll_{n,d}B^{n-1}\frac{\log(BH)}{\log M}\left(\frac{B}{M}+1\right)\]
vectors $(x_1,\dots,x_n)\in\ZZ^n\cap[-B,B]^n$
for which $R(x_2,\dots,x_n)\not=0$ and such that
$f_1(x_1,x_2,\dots,x_n)$ and $f_2(x_1,x_2,\dots,x_n)$ have a common
factor $p>M$ which does not divide $c$. This bound is again
acceptable, thereby completing our treatment of (\ref{n2}).

\section{The geometric sieve for quadrics:  preliminaries}\label{s:quadric}

We will deduce Theorem \ref{MT+} from a result in which the quadric
takes a specific shape.

\begin{theorem}\label{MT}
Let 
$Q_0(X_2,\dots,X_n)$ be a quadratic form defined over $\ZZ$ and let
$F_1(X_0,\dots, X_n),\dots,F_r(X_0,\dots, X_n)$
be forms defined over $\ZZ$. Write
\[Q(X_0,\dots,X_n)=X_0X_1-Q_0(X_2,\dots,X_n).\]
Assume that the rank of $Q_0$ is at least
$3$ and that the variety $Z\subset\mathbb{P}^{n}$ given by
\[Z:\, Q(X_0,\dots,X_n)=F_1(X_0,\dots, X_n)=
\dots=F_r(X_0,\dots, X_n)=0\]
has codimension at least 3 in $\mathbb{P}^n$.
For $B,M\ge 1$ let  $N(B,M)$  be the number of vectors
$\x\in\ZZ^{n+1}$ such that 
\begin{equation}\label{eq:Q}
Q(x_0,\dots,x_n)=0,
\end{equation}
with $|\x|\leq B$, and for which 
$F_1(x_0,\dots,x_n),\dots,F_r(x_0,\dots,x_n)$ have a
common prime divisor $p> M$. Then
\[N(B,M)\ll_{\ve, Q,F_1,\dots,F_r} \frac{B^{n-1+\ve}}{M\log M}+B^{n-1-1/n+\ve},\]
for any fixed $\ep>0$.
\end{theorem}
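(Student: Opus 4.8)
The plan is to parametrise the quadric $Q = X_0X_1 - Q_0(X_2,\dots,X_n) = 0$ by solving for the pair $(x_0,x_1)$ in terms of the remaining variables, and then to invoke Lemma~\ref{t:ekedahl} on the resulting configuration in affine space. Concretely, I would split the count $N(B,M)$ according to the value of $g := \gcd(x_0,x_1)$ (or at least localise the $2$-adic/prime structure of this pair), but the cleanest route is as follows. Write $d = x_0$; then the equation forces $x_1 = Q_0(x_2,\dots,x_n)/d$ whenever $d \neq 0$, so $d$ must divide $Q_0(x_2,\dots,x_n)$. For each divisor $d$ of $Q_0(\mathbf{x}')$ with $|d| \le B$, the point $\mathbf{x}$ is determined by $\mathbf{x}' = (x_2,\dots,x_n)$ and $d$, and the condition ``$p \mid F_j(\mathbf{x})$ for all $j$, for some $p > M$'' becomes a condition on $(\mathbf{x}', d) \in \ZZ^{n}$ lying in a lopsided box (the $d$-variable ranges over $[-B,B]$, but for most $\mathbf{x}'$ only few $d$ are admissible). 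One then substitutes $x_0 = d$, $x_1 = Q_0(\mathbf{x}')/d$ into the $F_j$, clears denominators by multiplying by a suitable power $d^{e_j}$, and obtains polynomials $\widetilde{F}_j(d, \mathbf{x}') \in \ZZ[d, X_2,\dots,X_n]$; the divisibility $p \mid F_j(\mathbf{x})$ with $p \nmid d$ is equivalent to $p \mid \widetilde{F}_j(d,\mathbf{x}')$.

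The main structural input is that, because $Z$ has codimension $\ge 3$ in $\PP^n$ and $\operatorname{rank} Q_0 \ge 3$, the polynomials $\widetilde{F}_1,\dots,\widetilde{F}_r$ (possibly together with an extra factor coming from the locus where the parametrisation degenerates, namely $d = 0$ or $x_1 = \cdots$) have no common factor in $\ZZ[d,X_2,\dots,X_n]$. This is where the codimension hypothesis is essential: if they shared a common factor, the zero locus $\widetilde{F}_1 = \cdots = \widetilde{F}_r = 0$ would contain a hypersurface, which would pull back to a codimension $\le 2$ piece of $Z$. I would isolate this coprimality claim as the key lemma of the section — it is a geometric statement about the pullback of $Z$ under the rational parametrisation of the quadric, and handling the indeterminacy locus of that parametrisation (the vertex/degenerate directions, and the $d = 0$ slice) carefully is what I expect to be the main obstacle. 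One has to check that the ``bad'' contributions — vectors $\mathbf{x}$ on $Q = 0$ with $x_0 = 0$, or with $Q_0(\mathbf{x}') = 0$, i.e. lying in the exceptional divisors of the parametrisation — are themselves $O(B^{n-1-\delta})$ or can be absorbed, which uses $\operatorname{rank} Q_0 \ge 3$ to bound the number of points on $Q_0 = 0$.

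With the coprimality of $\widetilde{F}_1,\dots,\widetilde{F}_r$ in hand, I would apply Lemma~\ref{t:ekedahl} with $n$ variables $(d,x_2,\dots,x_n)$, box sizes $B_1 = \cdots = B_n = B$ (after noting $H$, $d$ are all polynomially bounded in terms of $B$), obtaining a bound of the shape
\[
\frac{B^{n}\log(BH)}{M\log M} + B^{n-1}\log(BH).
\]
But this over-counts: the substitution $(d, \mathbf{x}') \mapsto (d, Q_0(\mathbf{x}')/d, \mathbf{x}')$ has image only among those $(d,\mathbf{x}')$ with $d \mid Q_0(\mathbf{x}')$, so the true count is the number of \emph{such} admissible pairs. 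Here one wins by a divisor argument: for fixed $\mathbf{x}'$, the number of $d$ with $d \mid Q_0(\mathbf{x}')$ and $|d| \le B$ is $O(B^\ve)$ on average (in fact $\ll |Q_0(\mathbf{x}')|^\ve \ll B^\ve$ trivially), but more importantly the geometric-sieve condition already constrains the pair, so one re-runs the argument of Lemma~\ref{t:ekedahl} keeping track of the divisibility constraint on the $d$-variable. The net effect is to replace one factor of $B$ by the count of admissible $d$, which is $\ll B^{1-1/n+\ve}$ in the worst case — this is exactly what produces the secondary term $B^{n-1-1/n+\ve}$ in the statement, and reconciling the sieve bound with this divisor-counting refinement (rather than simply quoting Lemma~\ref{t:ekedahl} as a black box) is the second delicate point. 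The $\log$ factors and the dependence on the coefficients of $Q_0$ and the $F_j$ are all swallowed by the $B^\ve$, giving the claimed estimate.
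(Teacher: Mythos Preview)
Your overall shape is right—parametrise the quadric by fixing one of $x_0,x_1$ and eliminating the other, substitute into the $F_j$ to get forms $K_i$ (your $\widetilde F_j$) in one fewer variable, verify these have no common factor using the codimension hypothesis, and then apply Lemma~\ref{t:ekedahl}. The paper does exactly this. But the step you wave through is where all the work lies, and as stated your argument does not close.

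The gap is the treatment of the constraint $d\mid Q_0(\mathbf x')$. You note that a direct application of Lemma~\ref{t:ekedahl} to $(d,\mathbf x')$ in the full box gives $B^n/(M\log M)+B^{n-1}$, which is off by a factor of $B$, and you propose to recover this factor by ``re-running the argument keeping track of the divisibility constraint'', asserting that the net effect replaces a factor $B$ by $B^{1-1/n}$. Neither claim is justified. Fixing $\mathbf x'$ and summing over the $O(B^\ve)$ divisors $d$ of $Q_0(\mathbf x')$ does not help, because the sieve condition depends on $d$ and cannot be decoupled. Fixing $d$ and sieving in $\mathbf x'$ is the right move, but then $\mathbf x'$ is confined to the set $\{Q_0\equiv 0\pmod q\}$ (with $q$ the good part of $d$), which is \emph{not} a box: it is a union of roughly $q^{n-3}$ lattices of determinant $q^{n-2}$ in $\ZZ^{n-1}$. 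The paper's Lemma~\ref{LL} makes this precise and, crucially, bounds how many of these lattices have a large successive minimum $\lambda_{n-1}>L$. One then changes basis in each lattice and applies the lopsided form of Lemma~\ref{t:ekedahl}; lattices with $\lambda_{n-1}\le L$ give the main term plus an error $\ll B^{n-2}L$ per unit $q$, while the exceptional lattices contribute $\ll B^{2n-3}L^{1-n}$. Balancing at $L=B^{1-1/n}$ is what produces the exponent $n-1-1/n$. None of this lattice machinery—nor the optimisation that yields the specific exponent—appears in your sketch, and without it the divisibility constraint cannot be converted into the required saving.
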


Let us show how this result implies Theorem \ref{MT+}.
We first note that if the quadric hypersurface has no
non-singular rational points (i.e.\  if $Q$ is not indefinite)
the rational points will be restricted to a linear space of dimension
$n-\rank(Q)$. In this case there will only be $O_n(B^{n-4})$
rational points of height $B$ or less. This is more than sufficient,
and so we may assume that there is at least one
smooth rational point. In this case there is a linear transformation
$\mathbf{T}_1\in\mathrm{SL}_{n+1}(\QQ)$ such that
\[Q(\mathbf{T}_1^{-1}\b{X})=X_0X_1-Q_1(X_2,\dots,X_n), \]
where $Q_1$ in a quadratic form with rational coefficients. Rescaling
the variables $X_2,\dots,X_n$ we obtain $\mathbf{T}_2\in\mathrm{GL}_{n+1}(\QQ)$
such that $Q(\mathbf{T}_2^{-1}\b{X})=Q^*(\b{X})$, where
\[Q^*(\b{X})=X_0X_1-Q_0(X_2,\dots,X_n), \]
with $Q_0\in\ZZ[X_2,\dots,X_n]$. We then have
$Q^*(\mathbf{T}_2\b{X})=Q(\b{X})$. We now choose $N$ so that $N\mathbf{T}_2=\mathbf{T}$ has
integer entries, with the result that $\mathbf{T}\b{x}$ is an integer zero of $Q^*$
whenever $\b{x}$ is an integer zero of $Q$. We can choose $\mathbf{T}$ to
depend only on $Q$, so that $|\mathbf{T}\b{x}|\ll_Q |\x|$. Finally, if the forms
$F_i$ have degrees at most $d$, and we set
$G_i(\b{X})=\det(\mathbf{T})^dF_i(\mathbf{T}^{-1}\b{X})$, then the forms $G_i$ will have
integer coefficients, and any common prime divisor of
$F_1(\x),\dots,F_r(\x)$ will also divide
$G_1(\mathbf{T}\x),\dots,G_r(\mathbf{T}\x)$. Since the variety $Q^*=G_1=\dots=G_r=0$ is
produced from $Q=F_1=\dots=F_r=0$ by a non-singular linear
transformation, it also has codimension at least 3 in $\mathbb{P}^n$.
We therefore see that Theorem \ref{MT} applies to $Q^*$ and
$G_1,\dots,G_r$, and yields exactly the bound required for Theorem \ref{MT+}.
\medskip

We now begin our treatment of Theorem \ref{MT}. For the proof we shall
allow all of our implied constants to depend on
the polynomials $Q,F_1,\dots,F_r$, as well as on the small parameter
$\ve>0$. We begin by disposing of points on the quadric (\ref{eq:Q})
for which there is a prime
$p>M$ dividing $x_0$ and $x_1$ as well as 
$F_1(x_0,\dots,x_n),\dots,F_r(x_0,\dots,x_n)$.
In this case $p^2$
divides $Q_0(x_2,\dots,x_n)$, so that $Q_0(x_2,\dots,x_n)=p^2k$ for some
integer $k\ll B^2p^{-2}$.  The equation $Q_0(x_2,\dots,x_n)=h$ has
$O(B^{n-3+\ep})$ integer solutions in $[-B,B]^{n-1}$, uniformly in $h$.
(This would be false for $h=0$ if $Q_0$ had rank at most 2 and
factored over $\mathbb{Q}$.)  Moreover the equation $x_0x_1=h$ has
$O(B^{\ve})$ solutions when 
$h\not=0$. The case $k\not=0$ therefore produces a contribution
\[\ll\sum_{p>M}B^2p^{-2}.B^{n-3+\ep}.B^{\ep}\ll \frac{B^{n-1+2\ep}}{M\log M}.\]
On the other hand, the equation $x_0x_1=0$ has $O(B)$ solutions of the
correct size, so that the case $k=0$ contributes $O(B^{n-3+\ep}.B)$
solutions. Hence, on re-defining $\ep$ we see that the number of
points under consideration is
\[\ll \frac{B^{n-1+\ep}}{M\log M}+B^{n-2+\ep}.\]
This is satisfactory for the theorem.

We may now assume that the common prime factor of
\[F_1(x_0,\dots,x_n),\dots,F_r(x_0,\dots,x_n)\]
does not divide both
$x_0$ and $x_1$, and we proceed to estimate $N_i(B,M)$,
defined for $i=0,1$ to be 
the number of vectors $\x\in\ZZ^{n+1}$ on the quadric (\ref{eq:Q})
such that $|\x|\leq B$, and for which  
$F_1(x_0,\dots,x_n),\dots,F_r(x_0,\dots,x_n)$ have a
common prime divisor $p> M$ which does not divide $x_i$. Clearly it
will now suffice to estimate both $N_0(B,M)$ and $N_1(B,M)$. By
symmetry, it will be enough to consider $N_1(B,M)$.

We may add suitable multiples of 
$Q$ to any of the forms $F_i$, so as to
suppose that $F_i$ has no monomials divisible by $X_0X_1$.
This will not affect the hypotheses of Theorem \ref{MT}.
If all the $F_i$ have degrees at most $D$ we may then write
\[F_i(X_0,\dots,X_n)=G_i(X_1,\dots,X_n)+
\sum_{j=1}^DX_0^jH_{i,j}(X_2,\dots,X_n),\]
say. Then if $(x_0,\dots,x_n)$ lies on the quadric (\ref{eq:Q}) we
will have
\[x_1^DF_i(x_0,\dots,x_n)=K_i(x_1,\dots,x_n),\]
with
\begin{align*}
  K_i(X_1,\dots,X_n)=X_1^D&G_i(X_1,\dots,X_n)\\
  &+
  \sum_{j=1}^DQ_0(X_2,\dots,X_n)^jX_1^{D-j}H_{i,j}(X_2,\dots,X_n).
  \end{align*}
Thus if $p\mid F_i$ for all $i$, then $p\mid K_i$ for all $i$.

We now claim that the forms $K_i$ can have no common factor of
positive degree over $\overline{\QQ}[X_1,\dots,X_n]$, except possibly a power of $X_1$. Suppose for a
contradiction that $R(X_1,\dots,X_n)$ is an irreducible form,
different from $X_1$, which divides all the forms $K_i$, so that
$K_i=RS_i$, say.  It is clear from our construction that we may write
\[K_i(X_1,\dots,X_n)=X_1^DF_i(X_0,\dots,X_n)
+Q(X_0,\dots,X_n)T_i(X_0,\dots,X_n)\]
for suitable forms $T_i$, so that
\[RS_i=X_1^DF_i+QT_i.\]
We then see that any point on $Q=R=0$ lies either on $Q=X_1=0$ or on
$Q=F_1=\dots=F_r=0$.  However every irreducible component of the
intersection $Q=R=0$ has codimension at most 2 in $\mathbb{P}^n$,
while the variety $Q=F_1=\dots=F_r=0$ was assumed to have codimension
at least 3. It follows that the intersection $Q=R=0$ must be contained
in the hyperplane $X_1=0$. This however is impossible. Indeed, since $X_1$
does not divide $R$ there are
points on $R=0$ for which $x_1\not=0$, and since $R$ does not involve
$X_0$ we can choose $x_0$ so that $Q=0$ as well. This gives a point of
$Q=R=0$ not lying on the hyperplane $X_1=0$.  This contradiction
proves our claim.

\section{The geometric sieve for quadrics: lattices}
\label{s:quadricL}
We now wish to count points on $Q=0$, such that the 
forms $K_i$ have a common factor $p>M$ that does not divide $x_1$. We
have arranged that the $K_i$ do not involve $X_0$, and that they have
no common factor of positive degree except possibly for powers of
$X_1$. We may remove any such factors, since they will not affect the
divisibility by $p$. Indeed we may remove any constant factors, since
Theorem \ref{MT} is trivial when $M\ll_{\ve, Q,F_1,\dots,F_r}1$,
because the quadric (\ref{eq:Q}) has $O(B^{n-1+\ep})$ points.

Our plan is to apply the geometric sieve for $\mathbb{A}^n$ to the
$K_i$, but we need to account for the condition that
$Q_0(x_2,\dots,x_n)=x_0x_1$. We may eliminate any mention of the
variable $x_0$ by weakening this last condition to say instead that
$x_1\mid Q_0(x_2,\dots,x_n)$.  In effect we then need a geometric
sieve for $\mathbb{A}^n$, with a divisibility side condition. We
tackle this problem by fixing $x_1$, and working with
$(x_2,\dots,x_n)\in\mathbb{A}^{n-1}$, subject to a divisibility
condition for a modulus $x_1$, which is now fixed. The key idea is
then to interpret this divisibility condition in terms of lattices.

It will be notationally convenient to work with a general quadratic
 form $R(X_1,\dots,X_m)$ of rank at least 3, in place of 
$Q_0(X_2,\dots,X_n)$.  We shall say that a prime is ``$R$-good'' if
it is odd and the 
reduction of $R$ modulo $p$  has the same rank as $R$ itself. Let $q$ be a
product of distinct $R$-good primes. We seek to cover all integer
vector solutions of the congruence $R(x_1,\dots,x_m)\equiv 0\mod{q}$ by
lattices of the shape
\beql{sLd}
\sL(\y):=\{\x\in \ZZ^m:\,\exists \rho\in \ZZ,\,\x\equiv \rho \y\mod{q}\},
\eeq
for suitable $\y\in \ZZ^m$ with $\gcd(\y,q)=1$. We note  that 
 $\sL(\y)$  has rank $m$ and  determinant $q^{m-1}$.
We begin by asking how many such lattices will be required.  

\begin{lemma}\label{LL}
  Suppose that $R\in\Z[X_1,\dots,X_m]$ is a quadratic form of
  rank at least $3$, and let $q\in\N$ be a product of distinct $R$-good
  primes. Then
\[\left\{\x\in \ZZ^m: R(\x)\equiv 0\mod{q}\right\}\subseteq 
\bigcup_{\y\in Y(q)}\sL(\y),\]
where $\sL(\y)$ is given by \eqref{sLd} and
$\card Y(q)\le (3m)^{\omega(q)}q^{m-2}$.  Moreover, 
 each $\y\in Y(q)$ is an integer vector satisfying
$Q(\y)\equiv 0\mod{q}$ and $\gcd(\y,q)=1$. 

 Finally, for any $L>0$, the number of these lattices for which the
 largest successive minimum is greater than $L$, is
\[\ll_m (3m)^{\omega(q)}q^{2m-3}L^{-m}.\]
\end{lemma}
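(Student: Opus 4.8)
The plan is to pass to the dual lattices and use a transference bound: the condition that $\lambda_m(\sL(\y))$ be large becomes the existence of a short non-zero vector $\w$ in $\sL(\y)^*$, and such a $\w$ forces $\y$ to lie, modulo $q$, on the quadric $R\equiv 0$ and on the hyperplane $\w\cdot\z\equiv 0$; the number of such $\y$ is then controlled prime by prime. One fact used throughout is that $\sL(\y)$ depends only on the class $[\bar\y]\in\PP^{m-1}(\ZZ/q\ZZ)$: since $\gcd(\y,q)=1$, the group $\sL(\y)/q\ZZ^m=\langle\bar\y\rangle$ is cyclic of order $q$ and its generators are exactly the unit multiples of $\bar\y$, so distinct lattices $\sL(\y)$ with $\y\in Y(q)$ give distinct points of $\{R\equiv 0\}\subseteq\PP^{m-1}(\ZZ/q\ZZ)$. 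Counting these points \emph{projectively}, rather than counting the vectors $\y$ modulo $q$, is precisely what will produce the exponent $q^{2m-3}$ instead of $q^{2m-2}$.

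First I would record that, for $\y$ with $\gcd(\y,q)=1$, the lattice $\sL(\y)=\ZZ\y+q\ZZ^m$ has dual $\sL(\y)^*=\tfrac1q\{\w\in\ZZ^m:\w\cdot\y\equiv 0\mod{q}\}$. If $\lambda_m(\sL(\y))>L$, then the transference theorem, in the form $\lambda_m(\Lambda)\lambda_1(\Lambda^*)\ll_m 1$, gives $\lambda_1(\sL(\y)^*)\ll_m 1/L$, so there is a non-zero $\w\in\ZZ^m$ with $\w\cdot\y\equiv 0\mod{q}$ and $|\w|\le T$, where $T:=c_mq/L$ for a suitable $c_m>0$ depending only on $m$. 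Hence the number of lattices to be bounded is at most
\[
\sum_{0<|\w|\le T}\card\{[\z]\in\PP^{m-1}(\ZZ/q\ZZ):R(\z)\equiv 0\mod{q},\ \w\cdot\z\equiv 0\mod{q}\}.
\]

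For a fixed $\w\neq\0$ I would estimate the inner count by multiplicativity over the primes $p\mid q$. If $p\nmid\w$, the two congruences describe the $\FF_p$-points of the quadric in $\PP^{m-2}$ cut out by the restriction $R|_{\w^\perp}$; since $p$ is $R$-good, the reduction of $R$ has rank at least $3$, so $R|_{\w^\perp}$ has rank at least $1$ and in particular is not identically zero, whence by Lemma \ref{A} it has at most $2(m-1)p^{m-2}$ affine zeros, hence at most $2(m-1)p^{m-2}/(p-1)\le 3m\,p^{m-3}$ projective ones (using $p\ge 3$). If $p\mid\w$ the linear congruence is vacuous, and the non-zero reduction of $R$ has at most $3m\,p^{m-2}$ projective zeros. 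Multiplying over $p\mid q$ bounds the inner count by $(3m)^{\omega(q)}q^{m-3}e(\w)$, where $e(\w):=\prod_{p\mid q,\,p\mid\gcd(\w)}p$. As $q$ is squarefree, $\sum_{d\mid n}\varphi(d)=n$ gives $e(\w)=\sum_{d\mid q,\,d\mid\gcd(\w)}\varphi(d)$, so
\[
\sum_{0<|\w|\le T}e(\w)=\sum_{d\mid q}\varphi(d)\,\card\{\w\neq\0:|\w|\le T,\ d\mid\gcd(\w)\}\le 3^mT^m\sum_{d\mid q}\frac{\varphi(d)}{d^m}\ll_m T^m,
\]
where I used that a non-zero $\w$ with $d\mid\gcd(\w)$ satisfies $d\le|\w|\le T$, so the inner cardinality is at most $(2T/d+1)^m\le(3T/d)^m$, and that $\prod_p(1+p^{1-m})$ converges because $m\ge 3$. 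Combining the last three displays bounds the number of lattices with $\lambda_m(\sL(\y))>L$ by $\ll_m(3m)^{\omega(q)}q^{m-3}T^m$, which, since $T=c_mq/L$, is $\ll_m(3m)^{\omega(q)}q^{2m-3}L^{-m}$, as required.

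The step I expect to be most delicate is arranging the exponent $q^{2m-3}$ rather than $q^{2m-2}$: counting the $\y$ as vectors modulo $q$ wastes a factor $q$, and the saving requires combining the fact that $\sL(\y)$ only sees the line of $\y$ modulo $q$ with the non-vanishing of $R|_{\w^\perp}$ modulo each $R$-good prime, so that cutting the quadric by the hyperplane $\w\cdot\z\equiv 0$ genuinely drops the point count by a factor $\asymp p$. The $R$-goodness hypothesis is exactly what guarantees this non-vanishing; without it the reduction of $R$ could vanish identically modulo some $p\mid q$, and the estimate would break down.
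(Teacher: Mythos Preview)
Your proposal addresses only the ``Finally'' assertion, but that is indeed where all the work lies; the covering statement and the bound $\#Y(q)\le(3m)^{\omega(q)}q^{m-2}$ follow immediately from Lemma~\ref{A} and the Chinese Remainder Theorem, exactly as the paper records in a few lines.

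For the final assertion your argument is correct and is essentially the paper's own proof. Both proceed by transference to the dual lattice to produce a short non-zero integer vector $\w$ (the paper calls it $\b{s}$) with $|\w|\ll_m q/L$ and $\w\cdot\y\equiv 0\mod{q}$; both then count, for each such $\w$, the projective points of $\{R\equiv 0\}\cap\{\w\cdot\z\equiv 0\}$ over $\ZZ/q\ZZ$ prime by prime, distinguishing $p\nmid\w$ (where $R$-goodness ensures the hyperplane section is non-trivial, giving $\le 3mp^{m-3}$) from $p\mid\w$ (giving $\le 3mp^{m-2}$); and both then sum over $\w$ according to the divisor $d=\gcd(q,\gcd(\w))$. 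The only cosmetic difference is that you rewrite $e(\w)=\sum_{d\mid e(\w)}\varphi(d)$ before interchanging sums, whereas the paper simply partitions the $\w$ by the value of $d$; either way one arrives at a sum of the shape $\sum_{d\mid q}d^{1-m}$, which is $O_m(1)$ since $m\ge 3$. Your explicit remark that $\sL(\y)$ depends only on the projective class $[\bar\y]$, and hence that counting lattices is the same as counting projective points, makes transparent a step the paper leaves implicit.
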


Note that our successive minima are taken with respect to the Euclidean
norm $||\cdot||_2$.

\begin{proof}
For the first part it is enough to consider the individual prime
factors of $q$, and to 
combine the corresponding lattices using the Chinese Remainder Theorem.
Assume that  $q=p$ is an $R$-good prime.  According to the final part
of Lemma \ref{A} the congruence $R(\x)\equiv 0\mod{p}$ has at most
$2mp^{m-1}$ solutions. (This is a very poor bound, but sufficient for
our purposes.)
The solutions $\x\not\equiv\mathbf{0}\mod{p}$ will then be covered by at most
\[2mp^{m-1}/(p-1)\le 3mp^{m-2}\]
lattices $\sL(\y)$ with $Q(\y)\equiv 0\mod{p}$ and
$p\nmid\y$. Since $m\ge 3$ there is at least one such $\y$, and the
corresponding lattice will cover the solution $\mathbf{0}$.
It then follows that for general $q$ we can cover all solutions using
at most $(3m)^{\omega(q)}q^{m-2}$
lattices $\sL(\y)$ with $R(\y)\equiv 0\mod{q}$ and $\gcd(\y,q)=1$. 

Associated to  any rank $m$ lattice $\sL\subset\R^m$ is the 
dual  lattice 
$$
\sL^*=\{\mathbf{t}\in\R^m: \mathbf{t}.\x\in\Z, \forall \x\in\sL\}.
$$
If  the successive minima of 
$\sL$ are $\lambda_1\leq \dots \leq \lambda_m$, and 
  the successive minima of the dual lattice
$\sL^*$ are $\lambda_1^*\leq \dots \leq \lambda_m^*$, then it follows
from Theorem VI on page 219 of Cassels \cite{cassels} that 
$$
1\leq \lambda_i \lambda_{m+1-i}^* \leq m! ,
$$
for $1\leq i\leq m$. We shall apply this with $\sL=\sL(\y)$. 
Assume that  $\lambda_m>L$.
Then it follows that $\lambda_1^* \leq m! L^{-1}$.
Since $q\Z^m\subseteq\sL(\y)\subseteq\Z^m$, it follows that 
$\Z^m\subseteq\sL(\y)^*\subseteq q^{-1}\Z^m$. Each element of
$\sL(\y)$ has the shape 
 $\rho\y+q\b{k}$ for some $\b{k}\in\Z^m$, so that
$q^{-1}\b{s}$ belongs to $\sL(\y)^*$ if and only if $\b{s}$ is an integer
vector for which $q^{-1}\rho \b{s}.\y\in\Z$ for every $\rho\in \ZZ$. 
But this is  equivalent to 
 $\b{s}$ being an integer
vector for which $\b{s}.\y\equiv 0 \mod{q}$. 
Thus $q\lambda_1^*$ will be the
length of the shortest non-zero integer vector for which
$\b{s}.\y\equiv 0\mod{q}$. It follows that if $\sL(\y)$ has
$\lambda_m> L$ then $\b{s}.\y\equiv 0\mod{q}$ for some non-zero
integer vector $\b{s}$ with $|\b{s}|\leq m! q/L$.

We now bound the number of lattices with $\lambda_m>L$.
Here we should recall that the total number of
lattices $\sL(\y)$ under consideration is at most $(3m)^{\omega(q)}q^{m-2}$.
For each choice of $\b{s}$ we count values of $\y$ modulo
$q$ for which both $R(\y)\equiv 0\mod{q}$ and $\b{s}.\y\equiv 0\mod{q}$.
This can be done by applying the Chinese Remainder
Theorem to the case in which $q=p$ is a prime. The vector $\b{s}$ need
not be primitive, and if $p\mid\b{s}$ there will be at most
$2mp^{m-1}$ values of $\y$, as above. On the other hand, when
$p\nmid\b{s}$ the conditions produce a non-trivial hyperplane slice of
the quadric $R=0$ over $\mathbb{F}_p$. Since the prime $p$ is $R$-good
the form $R$ has rank at least 3 over $\mathbb{F}_p$. It follows that
the hyperplane cannot contain the quadric, whence Lemma \ref{A} shows that 
there are at most $2(m-1)p^{m-2}$ solutions $\y$, corresponding to at
most
\[\frac{2(m-1)p^{m-2}}{p-1}\le 3mp^{m-3}\]
points in $\mathbb{P}^{m-1}(\mathbb{F}_p)$.  It then
follows from the Chinese Remainder Theorem that there are at most
$(3m)^{\omega(q)}q^{m-3}\gcd(q,\b{s})$
distinct lattices corresponding to $\b{s}$.
We may now sum over non-zero integer
vectors $\b{s}$ with $|\b{s}|\leq m!q/L$. When $\gcd(q,\b{s})=d$, say,
there are no such $\b{s}$ unless $d\leq m!q/L$, in which case there
will be at most $\ll_m q^mL^{-m}d^{-m}$ possible vectors $\b{s}$.
This gives a total contribution
\[\ll_m (3m)^{\omega(q)}q^{m-3}d\cdot q^mL^{-m}d^{-m},\]
for each divisor $d$ of $q$. Since $m\ge 3$ we may then sum over
$d\mid m$ to produce the bound stated in the lemma.
\end{proof}

We are now ready to put our plan into action. Recall that we are
counting points $(x_1,\dots,x_n)\in\ZZ^n$ of size at most $B$, such
that $x_1$ is non-zero and is a divisor of $Q_0(x_2,\dots,x_n)$, and for which
$K_1(x_1,\dots,x_n),\dots,K_r(x_1,\dots,x_n)$ have a common prime
factor $p>M$ which does not divide $x_1$.

We take $q=q(x_1)$ to be the product of
all $Q_0$-good primes dividing $x_1$, and we weaken the condition
$x_1\mid Q_0(x_2,\dots,x_n)$, requiring instead only that
$q\mid Q_0(x_2,\dots,x_n)$.  We apply Lemma \ref{LL} to the form
$R=Q_0$, in $m=n-1$ variables.  The corresponding lattices $\sL(\y)$
are therefore contained in $\ZZ^{n-1}$. The lemma then shows that 
\[N_1(B,M)\leq
\sum_{q\le B}\sum_{\substack{a\neq 0\\ q(a)=q}}\sum_{\y\in Y(q)}N(B,M,q,\y,a),\]
where $N(B,M,q,\y,a)$ is the number of 
$\x=(x_2,\dots,x_n)\in\sL(\y)$ in the box $|\x|\le B$ for which
the polynomials $K_i(a,x_2,\dots,x_n)$ all have a common prime divisor
$p> M$. Notice that we have written $a$ in place of $x_1$ to emphasize
the different role it plays in our argument.

We proceed to estimate how many values of $a$ can correspond
to a given $q$.  Let $\Delta$ be the product of the (finitely many)
primes which are not $Q$-good. Then $q$ will divide $a$ and every
prime factor of $a/q=t$ will divide $\Delta q$.  Since we will have
$|t|\le B$ we find using Rankin's trick that the
number of available $t$ is at most
\begin{align*}
  2\sum_{\substack{1\le t\le B\\ t\mid (\Delta q)^{\infty}}}1&\le 
  2\sum_{\substack{t\mid (\Delta q)^{\infty}}}
  \frac{B^{\ep}}{t^{\ep}}\\
    &= 2B^{\ep}\prod_{p\mid\Delta q}\frac{1}{1-p^{-\ep}}\\
    &\ll B^{\ep}\tau(\Delta q)\\
    &\ll B^{2\ep},
\end{align*}
whenever $\ep>0$. Here we have used the fact that $q\le B$ at the very
last step. On   re-defining $\ep$, we therefore see
 that for every $q$ there is a value $a^{(q)}$
 which is divisible by $q$, 
 such that
\beql{N1BM}
N_1(B,M)\ll B^\ve\sum_{q\le B}\sum_{\y\in Y(q)}N(B,M,q,\y,a^{(q)}).
\eeq

Suppose now that we have a lattice $\sL=\sL(\y)$ with $\y\in Y(q)$.
As previously, suppose that $\lambda_1\le\dots\le\lambda_m$ are the 
successive minima of $\sL$, which we recall has determinant $q^{m-1}$.
(Here we continue to use the notation $m=n-1$ for the dimension of $\sL(\y)$.)
It follows from Minkowski's second convex body theorem
\cite[Section~VIII.2]{cassels}
 that
\beql{lp}
q^{m-1}\leq \prod_{i=1}^m \lambda_i\ll_m q^{m-1}. 
\eeq
Moreover, it is clear that $\sL$ has $m$ independent vectors
of length $q$, so that $\lambda_m\le q$.  
According to the corollary to Theorem VII on page 222 of
Cassels \cite{cassels}, the lattice $\sL$
has a basis $\b{e}_1,\dots,\b{e}_m$ with $|\b{e}_j|\ll\lambda_j$ for
all $j$.  We now define
$\mathbf{E}$ to  be the $m\times m$ matrix formed by the column
vectors $\b{e}_1, \dots ,\b{e}_m$. 
Then the maximum modulus of the entries of $\mathbf{E}$ is
\[
||\mathbf{E}||\ll \lambda_m\le q\le B.
\]
Moreover, 
$|\det(\mathbf{E})|=\det(\sL)=q^{m-1}$.
We then see that 
$\mathbf{E}^{-1}$ is the transpose of the matrix formed from column
vectors $\b{e}_1^*,\dots, \b{e}_m^*$, say, where 
\begin{align*}
  |\b{e}_j^*|&\ll
  |\det(\mathbf{E})|^{-1}\prod_{\substack{i=1\\ i\not =j}}^m|\b{e}_i|\\
  &\ll |\det(\mathbf{E})|^{-1}\prod_{\substack{i=1\\ i\not =j}}^m\lambda_i\\
&\ll \lambda_j^{-1},
\end{align*}
by \eqref{lp}. Moreover, as described in \cite[Section I.5]{cassels}, we have 
$$
\e_j^*.\e_i=\begin{cases}
1 &\text{ if $i=j$,}\\
0 &\text{ otherwise.}
\end{cases}
$$
Thus if $\x\in\sL$ is
written as $\x=w_1\b{e}_1+\dots+w_m\b{e}_m$, we will have
$w_j=\b{e}_j^*.\x$, so that 
$w_j\ll|\x|/\lambda_j$ for each index $j$.  

The next stage of the argument is to handle those $\y\in Y(q)$ for
which one has $\lambda_m>L$. Since we automatically have
$\lambda_m\le q\le B$,
it follows from the above that 
the number of $\x=(x_2,\dots,x_n)\in\sL(\y)$ in
the box $|\x|\le B$ will be
\[
\ll \prod_{1\leq j\leq m} \left(B/\lambda_j +1\right)
\ll B^mq^{1-m}=B^{n-1}q^{2-n}. \]
Thus we will trivially have
$N(B,M,q,\y,a^{(q)})\ll B^{n-1}q^{2-n}$. Combining this with the estimate in
Lemma \ref{LL} for the number of lattices with $\lambda_n>L$
we find that the contribution to $N_1(B,M)$ is
\[\ll B^{\ep}\sum_{q\le B}(3n)^{\omega(q)}q^{2n-5}L^{1-n}B^{n-1}q^{2-n}\ll
B^{2n-3+2\ep}L^{1-n}.\]
On re-defining $\ep$, we therefore conclude that 
\begin{equation}\label{need}
  N_1(B,M)\ll B^{2n-3+\ve}L^{1-n}+B^\ve
  \sum_{q\le B}\sum_{\substack{\y\in Y(q)\\ \lambda_m\leq L}}
N(B,M,q,\y,a^{(q)}).
\end{equation}

Suppose now that $\sL=\sL(\y)$ is a lattice  with $\y\in Y(q)$, and for
which $\lambda_m\leq L$. We define polynomials
\[f_i(W_1,\dots,W_m)=K_i\left(a^{(q)},\mathbf{E}\b{W}\right)\;\;\;
(1\le i\le r),\]
where $\mathbf{W}$ is the column vector $(W_1,\dots, W_m)$ and
$\mathbf{E}$ is the matrix defined above, formed from the basis vectors for
$\sL$. We are then left with estimating
the number of integer vectors
$\b{w}\in\ZZ^m$,  with $w_j\ll B/\lambda_j$ for $1\leq j\leq m$,  and 
for which all the $f_i(\b{w})$ have a prime factor $p> M$ in common,
for which $p\nmid a^{(q)}$. We already observed that the forms $K_i$ can be
taken to have no common factor, and we now claim that the polynomials
$f_i$ can have no common factors apart possibly for primes $p$ that
divide $a^{(q)}$. To see this, suppose firstly that $g(W_1,\dots,W_m)$ is a
non-constant common factor of the $f_i$, with $f_i=gh_i$, say.
We then set $W_i=U_iU_0^{-1}$ and multiply through by $U_0^{d_i}$, 
where $d_i$ is the degree of $f_i$.  This will produce relations
\[K_i\left(a^{(q)}U_0,\mathbf{E}\b{U}\right)=G(U_0,\dots,U_m)
H_i(U_0,\dots,U_m)\]
in which $G$ and the $H_i$ are homogeneous, and $G$ is
non-constant. After a non-singular linear change of variables one would then find a
common factor of the forms $K_i(X_1,\dots,X_n)$, at least over
$\QQ[X_1,\dots,X_n]$. This contradiction shows that the $f_i$ cannot
have a non-constant common factor.  Suppose now that there is a prime
common factor $p\nmid a^{(q)}$. It is then clear that $p$ must divide
the forms $K_i\left(W_0,\mathbf{E}\b{W}\right)$. However, since
$\mathbf{E}$ has determinant $q^{m-1}$, with $q\mid a^{(q)}$, it must be
invertible modulo $p$.  It would then follow that $p$ divides each of
the forms $K_i(X_1,\dots,X_n)$, which is impossible.

Since we are concerned with common prime factors $p>M$ which do not
divide $a^{(q)}$ we may remove from the polynomials $f_i$ any constant
factors dividing $a^{(q)}$.  The situation is then exactly right for
an application of Lemma \ref{t:ekedahl}. We note that the polynomials
$f_i$ have height bounded by a power of $B$, so that the lemma yields
\[N(B,M,q,\y,a^{(q)})
\ll\frac{V\log B}{M\log M}+\frac{V\log B}{B_{\mathrm{min}}},\]
where
\[V=\prod_{i=1}^m(B/\lambda_i)\ll B^mq^{1-m}\]
by (\ref{lp}), and $B_{\mathrm{min}}=B/\lambda_m\ge B/L$. Here we have used
the observation that $\lambda_i\le q\le B$ for each index $i$, so
that $B/\lambda_i\gg 1$.  Recalling that $m=n-1$ it follows that
\[N(B,M,q,\y,a^{(q)}) \ll B^{n-2}q^{2-n}\left\{\frac{B}{M\log M}+L\right\}\log B.\]
We proceed to insert this estimate into (\ref{need}), using the bound
for $\card Y(q)$ given by Lemma \ref{LL}.  This produces
\begin{align*}
  N(B,M)&\ll B^{2n-3+\ve}L^{1-n}\\
 & \hspace{1cm}+B^\ve
  \sum_{q\le B}(3n)^{\omega(q)}q^{n-3}\cdot B^{n-2}q^{2-n}\left\{
  \frac{B}{M\log M}
  +L\right\}\log B\\
  &\ll B^{2n-3+\ve}L^{1-n}+B^{n-2+2\ve}\left\{
  \frac{B}{M\log M}
+L\right\}.
\end{align*}
We therefore choose $L=B^{1-1/n}$, and Theorem \ref{MT} follows, on
re-defining $\ep$.

\section{Proof of Theorem \ref{MT++}}

Our argument starts in the same way as for Theorem \ref{MT+} in
Section \ref{s:quadric}. As before we may assume that
$Q(\b{X})=X_0X_1-Q_0(X_2,\dots,X_n)$ with
\[Q_0(X_2,\dots,X_n)=X_2^2-dX_3^2,\]
for $d\in \ZZ$ a non-square.
Similarly, points where there is a prime
$p>M$ which divides $x_0$ and $x_1$ as well as
$F_1(\x),\dots,F_r(\x)$ contribute 
\[\ll B^{n-1+\ep}M^{-1}+B^{n-2+\ep}.\]
We should note though that in order to assert that
$Q_0(x_2,\dots,x_n)=0$ has $O(B^{n-3+\ep})$ integral solutions in
$[-B,B]^{n-1}$ we need to use the fact that $d$ is not a square. We
then have to estimate $N_1(B,M)$, and we may take
$F_i(\b{X})=K_i(X_1,\dots,X_n)$ to be independent of $X_0$.

As before we change notation, replacing $Q_0(X_2,\dots,X_n)$ by
$R(X_1,\dots,X_m)$ with $m=n-1$, and $R(X_1,\dots,X_m)=X_1^2-dX_2^2$.
However, instead of using ``$R$-good'' primes we will employ a
different classification. We will say that a prime $p$ is ramified if
$p\mid 2d$, and otherwise is split if $d$ is a square modulo $p$, and
inert if $d$ is a non-square modulo $p$. Suppose that $q_1$ is a product of
distinct  split primes, and $q_2$ a product of distinct inert
primes. We define the lattices 
\[\sL(\rho;q_1,q_2)=
\{\x\in \ZZ^m:\,x_1\equiv \rho x_2\mod{q_1},\, x_1\equiv x_2\equiv 0 \mod{q_2}
\},\]
for integers $\rho$ in the set
\[Z(q_1)=\{\rho\mod{q_1}:\,\rho^2\equiv d\mod{q_1}\}.\]
These lattices have $\det(\sL(\rho;q_1,q_2))=q_1q_2^2$ for each
$\rho\in Z(q_1)$. Moreover it is clear that
$\card Z(q_1)=2^{\omega(q_1)}$.  We then have the following result, which
will replace Lemma \ref{LL}.

\begin{lemma}\label{LL+}
  Suppose that $R(X_1,\dots,X_m)=X_1^2-dX_2^2$, where $d\in \ZZ$ is
  a non-square.  Let $q_1$ be a product of
distinct split primes, and $q_2$ a product of distinct inert
primes.  Then
\[\left\{\x\in \ZZ^m: R(\x)\equiv 0\mod{q_1q_2}\right\}\subseteq 
\bigcup_{\rho\in Z(q_1)}\sL(\rho;q_1,q_2).\]
Moreover, for each of the lattices
$\sL(\rho;q_1,q_2)$ the largest successive minimum is $O(q_1^{1/2}q_2)$, with an
implied constant depending only on $d$.
\end{lemma}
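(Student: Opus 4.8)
The plan is to prove Lemma \ref{LL+} in two parts, mirroring the structure of Lemma \ref{LL} but exploiting the explicit factorisation behaviour of the binary form $X_1^2-dX_2^2$ over $\FF_p$. For the containment statement, it suffices by the Chinese Remainder Theorem to treat the prime power factors of $q_1$ and $q_2$ separately. First I would observe that for a split prime $p\mid q_1$, the form $R$ factors modulo $p$ as $(X_1-\rho X_2)(X_1+\rho X_2)$ where $\rho^2\equiv d\mod p$, so any solution of $R(\x)\equiv 0\mod p$ satisfies $x_1\equiv\pm\rho x_2\mod p$, i.e.\ lies in one of the two residue classes recorded by $Z(p)$; lifting to prime powers dividing $q_1$ is routine since $2\rho$ is a unit mod $p$ (as $p\nmid 2d$), so Hensel's lemma applies and the factorisation persists. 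For an inert prime $p\mid q_2$, the form $X_1^2-dX_2^2$ is anisotropic over $\FF_p$, so $R(\x)\equiv 0\mod p$ forces $x_1\equiv x_2\equiv 0\mod p$; again this lifts to prime powers. Combining via CRT over the distinct primes dividing $q_1q_2$ gives exactly the stated union over $\rho\in Z(q_1)$, and the count $\#Z(q_1)=2^{\omega(q_1)}$ follows since each split prime contributes exactly two square roots of $d$.

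For the bound on the largest successive minimum, the key point is to exhibit $m$ linearly independent vectors in $\sL(\rho;q_1,q_2)$ of length $O(q_1^{1/2}q_2)$. The lattice splits as a direct sum: in the $(x_1,x_2)$-plane we have the index-$q_1q_2^2$ sublattice of $\ZZ^2$ cut out by $x_1\equiv\rho x_2\mod{q_1}$ and $x_1\equiv x_2\equiv 0\mod{q_2}$, while in the remaining $m-2$ coordinates the lattice is all of $\ZZ^{m-2}$. The latter contributes $m-2$ unit vectors. For the planar part, which is a rank-$2$ lattice of determinant $q_1q_2^2$, Minkowski's theorem guarantees a nonzero vector of length $\ll (q_1q_2^2)^{1/2}=q_1^{1/2}q_2$, and then a second independent vector of length $\ll q_1q_2^2 / (q_1^{1/2}q_2)^{-1}$... more carefully, the product of the two successive minima of the planar lattice is $\asymp q_1 q_2^2$ and the larger one is at most the obvious bound $\le q_1 q_2^2$ coming from $(q_1q_2^2,0)\in\sL$; but one wants $O(q_1^{1/2}q_2)$. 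Here one uses that $q_2\ZZ^2$ is contained in the planar lattice, giving two independent vectors $(q_2,0),(0,q_2)$... no, those need not satisfy $x_1\equiv\rho x_2\mod{q_1}$. Instead: the vectors $(\rho q_2, q_2)$ and $(q_1 q_2, 0)$ both lie in the planar lattice — wait, $(q_1q_2,0)$ has $x_1=q_1q_2\equiv 0\equiv\rho\cdot 0\mod{q_1}$ and $x_1\equiv 0, x_2=0\mod{q_2}$, so yes — but this has length $q_1q_2$, too big. The honest route is to invoke Minkowski's second theorem directly: the planar lattice has $\lambda_1\lambda_2\asymp q_1q_2^2$ and $\lambda_1\le\sqrt{2}\,(q_1q_2^2/(\pi/4))^{1/2}\ll q_1^{1/2}q_2$; for the bound $\lambda_2\ll q_1^{1/2}q_2$ one additionally observes that the planar lattice contains the vectors $(q_2 x_1', q_2 x_2')$ for any $(x_1',x_2')$ in the index-$q_1$ sublattice $x_1'\equiv\rho x_2'\mod{q_1}$ of $\ZZ^2$, and this index-$q_1$ lattice has $\lambda_2\ll q_1^{1/2}$ (since its determinant is $q_1$ and $\lambda_1\ge 1$, Minkowski gives $\lambda_2\ll q_1$, which is not enough either)...

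The main obstacle, then, is precisely establishing that the planar lattice's \emph{larger} successive minimum is $O(q_1^{1/2}q_2)$ rather than merely $O(q_1q_2^2)$. The resolution is to note that $\rho^2\equiv d\mod{q_1}$ and $d$ is \emph{fixed}, so $|\rho|$ can be taken $\le q_1$, but more usefully the lattice $\{(x_1,x_2):x_1\equiv\rho x_2\mod{q_1}\}$ contains $(\rho,1)$ and its "conjugate-like" partner: one checks that if $\rho^2\equiv d\mod{q_1}$ then the vector $(d,\rho)$ also lies in it, since $d\equiv\rho^2\equiv\rho\cdot\rho\mod{q_1}$; the determinant of the span of $(\rho,1)$ and $(d,\rho)$ is $\rho^2-d\equiv 0\mod{q_1}$, and in fact equals $q_1$ up to the bounded factor $(\rho^2-d)/q_1=O_d(1)$ when $\rho$ is chosen minimally. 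Hence both generators have length $O_d(q_1^{1/2})$ after reduction (taking $\rho$ in $(-q_1^{1/2},q_1^{1/2}]$ is not generally possible, but replacing $(\rho,1)$ by its image under lattice reduction together with $(d,\rho)$ yields a reduced basis with both vectors of size $O_d(q_1^{1/2})$, since the Gram matrix has determinant $q_1^2$ and trace $O_d(q_1)$). Scaling by $q_2$ gives two independent vectors of length $O_d(q_1^{1/2}q_2)$ in the planar part of $\sL(\rho;q_1,q_2)$, and together with the $m-2$ standard basis vectors from the remaining coordinates this shows $\lambda_m(\sL(\rho;q_1,q_2))=O_d(q_1^{1/2}q_2)$, as required. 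I will present the argument in this order: CRT reduction and factorisation behaviour for the containment, then the explicit basis for the planar lattice via the pair $(\rho,1),(d,\rho)$ for the successive minimum bound.
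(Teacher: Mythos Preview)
Your treatment of the containment is essentially the paper's argument, modulo one small confusion: $q_1$ and $q_2$ are products of \emph{distinct} primes, hence squarefree, so no Hensel lifting to prime powers is needed at all. The CRT reduction to single primes, together with the split/inert dichotomy for $X_1^2-dX_2^2$ over $\FF_p$, is all that is required.

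The gap is in your bound for the largest successive minimum. Your attempted route via the explicit pair $(\rho,1),(d,\rho)$ does not work as stated: these vectors span a sublattice of $\sL_0(\rho)=\{x_1\equiv\rho x_2\bmod q_1\}$ of determinant $|\rho^2-d|$, which is a multiple of $q_1$ but can be of order $q_1^2$ when $\rho$ is taken in $[0,q_1)$. Your claim that the Gram matrix of a reduced basis has ``determinant $q_1^2$ and trace $O_d(q_1)$'' is therefore unjustified --- the determinant need not be $q_1^2$, and nothing you have said bounds the trace. Lattice reduction of this pair gives vectors of size $O(|\rho^2-d|^{1/2})$, which can be as large as $q_1$.

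The missing idea is a \emph{lower} bound on $\lambda_1(\sL_0(\rho))$. Any nonzero $(x_1,x_2)\in\sL_0(\rho)$ satisfies $x_1^2-dx_2^2\equiv 0\pmod{q_1}$; since $d$ is not a square this quantity is nonzero, so $q_1\le|x_1^2-dx_2^2|\le(1+|d|)\|\x\|_2^2$, giving $\lambda_1\gg_d q_1^{1/2}$. Now Minkowski's second theorem ($\lambda_1\lambda_2\ll q_1$) yields $\lambda_2\ll_d q_1^{1/2}$. Scaling by $q_2$ and adjoining the $m-2$ unit vectors $\e_3,\dots,\e_m$ then gives $\lambda_m(\sL(\rho;q_1,q_2))\ll_d q_1^{1/2}q_2$. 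This is precisely the paper's argument, and it is the step your proposal does not supply.
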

\begin{proof}
  For any split prime $p$, and any $x_1,x_2\in\ZZ$ satisfying
  $x_1^2\equiv dx_2^2\mod{p}$, there is an integer $\rho$ for which
  $\rho^2\equiv d\mod{p}$ and $x_1\equiv\rho x_2\mod{p}$. Moreover, for
  any inert prime $p$ we have $x_1\equiv x_2\equiv 0\mod{p}$
whenever
  $x_1^2\equiv dx_2^2\mod{p}$. It follows via
  the Chinese Remainder Theorem that the lattices $\sL(\rho;q_1,q_2)$ with
  $\rho\in Z(q_1)$ cover all solutions of
  $R(\x)\equiv 0\mod{q_1q_2}$. Finally, $\sL(\rho;q_1,q_2)$ has a basis
  consisting of the $m-2$
 unit coordinate vectors $\b{e}_3,\dots,\b{e}_m$, together with two
 further vectors $(q_2\b{a},0,\dots,0)$ and $(q_2\b{b},0,\dots,0)$, where
 $\b{a}$ and $\b{b}$ are 2-dimensional vectors forming a basis for the
 lattice 
\[\sL_0(\rho)=\{\x\in \ZZ^2:\,x_1\equiv \rho x_2\mod{q_1}\}.\]
This lattice has determinant $q_1$, and successive minima satisfying
$\lambda_1\lambda_2\ll q_1$. However, for any non-zero vector
$\x\in\sL_0(\rho)$ one has
\[x_1^2-dx_2^2\equiv \rho^2x_2^2-\rho^2x_2^2\equiv 0\mod{q_1}.\]
Moreover $x_1^2-dx_2^2$ cannot vanish, since $d$ is not a square. We
therefore deduce that
\[q_1\le|x_1^2-dx_2^2|\le |d|\cdot ||\x||_2^2.\] 
Thus we must have $\lambda_1\gg q_1^{1/2}$, and hence
$\lambda_2\ll q_1^{1/2}$.  It follows that the vectors $\a$ and $\b{b}$
above may be chosen both to have length $O(q_1^{1/2})$, so that the
largest successive minimum of $\sL(\rho;q_1,q_2)$ is $O(q_1^{1/2}q_2)$, as
required.
\end{proof}

Now, following the argument in Section \ref{s:quadricL} we take
$q_1=q_1(x_1)$ to be the product of split
primes dividing $x_1$, and similarly $q_2=q_2(x_1)$ to be the product of inert primes
dividing  $x_1$. We write $q=q_1q_2$. As before, we weaken the condition 
$x_1\mid Q_0(x_2,\dots,x_n)$, requiring only that
$q\mid Q_0(x_2,\dots,x_n)$. In analogy to (\ref{N1BM}) 
there exists $a^{(q)}$ such that 
\beql{XX}
  N_1(B,M)\ll 
B^{\ve}\sum_{q=q_1q_2\le B}\sum_{\rho\in Z(q_1)}N(B,M,q,\rho,a^{(q)}),
\eeq
where $N(B,M,q,\rho,a)$ is the number of 
$\x=(x_2,\dots,x_n)\in\sL(\rho;q_1,q_2)$ in the box $|\x|\le B$ for which
the polynomials $K_i(a,x_2,\dots,x_n)$ all have a common prime divisor
$p> M$.

The argument then proceeds as before, but without the need to handle
separately lattices where the largest successive minimum is big.  If
the successive minima of $\sL(\rho;q_1,q_2)$ are
$\lambda_1\le\dots\le\lambda_m$ (with $m=n-1$) we apply Lemma~\ref{t:ekedahl} to vectors $\b{w}$ with $w_i\ll B/\lambda_i$ to show that
\[N(B,M,q,\rho,a^{(q)})\ll
\frac{V\log B}{M\log M}+\frac{V\log B}{B_{\mathrm{min}}},\]
with $V=\prod_{i=1}^m(B/\lambda_i)$. Since
\[\prod_{i=1}^m\lambda_i\geq 
\det(\sL(\rho;q_1,q_2))=q_1q_2^2\]
we find that
\[N(B,M,q,\rho,a^{(q)})\ll
\frac{B^m\log B}{q_1q_2^2M\log M}+\frac{B^{m-1}\lambda_m\log B}{q_1q_2^2}.\]
According to Lemma \ref{LL+} we have
$\lambda_m\ll q_1^{1/2}q_2$. Since $\card Z(q_1)\ll B^{\ep}$ we then
deduce from (\ref{XX}) that
\begin{align*}
  N_1(B,M)&\ll 
B^{2\ve}\sum_{q_1q_2\le B}\left\{\frac{B^m\log B}{q_1q_2^2M\log M}+
\frac{B^{m-1}\log B}{q_1^{1/2}q_2}\right\}\\
&\ll 
B^{3\ve}\left\{\frac{B^m}{M\log M}+B^{m-1/2}\right\}.
\end{align*}
On recalling that $m=n-1$ we see that this is sufficient for
Theorem \ref{MT++}, after re-defining $\ep$.

\section{Proof of Corollary \ref{cor:gcd}: coprime polynomials}\label{s:coprime}

The implied constants in this section are allowed to depend on $Q,f$ and $g$.
Assume that $Q$ is an indefinite quadratic form of rank at least
$5$. For any square-free $q\in \NN$ and any vector
$\a\in(\ZZ/q\ZZ)^n$, we shall require an asymptotic formula for   
\begin{equation}\label{eq:count}
N(B;q,\a)=\#\{\x\in \ZZ^n\cap[-B,B]^n: Q(\x)=0,~ \x\equiv \a\mod{q}\},
\end{equation}
as $B\to \infty$, in which the error term depends explicitly on $q$. 
In fact there exist constants $\delta,\Delta>0$ such that 
\begin{equation}\label{eq:asymptotic}
N(B;q,\a)=c(q,\a) B^{n-2}+O(q^\Delta B^{n-2-\delta}),
\end{equation}
where the implied constant depends on $Q$ but not on $\a$ or $q$.
Assuming that $q$ is square-free and that $Q(\a)\equiv 0\mod{q}$, the
leading constant is positive and  
takes the shape 
$$
c(q,\a)=
\sigma_\infty \prod_{p\nmid q}\sigma_p\prod_{p\mid q} \sigma_p(\a).
$$
Here $\sigma_\infty$ is  the density of real zeros of $Q$, which is
independent of $q$ and $\a$. Moreover
\[\sigma_p=\lim_{k\to \infty}p^{-k(n-1)}\nu(p^k)\;\;\;\mbox{and}\;\;\;
\sigma_p(\a)=\lim_{k\to \infty}p^{-k(n-1)}\nu(p^k;p,\a),\]
with 
\[\nu(p^k)=\#\left\{\x\in (\ZZ/p^k\ZZ)^n: Q(\x)\equiv
0\mod{p^k}\right\}\]
and
\[\nu(p^k;p,\a)=\#\left\{\x\in (\ZZ/p^k\ZZ)^n: 
\begin{array}{l}Q(\x)\equiv 0\mod{p^k}\\ \x\equiv \a\mod{p}\end{array}
\right\},\]
for every  prime $p$. As part of the circle method analysis one shows
that all the limits involved  exist. We shall write 
$c=c(1,\mathbf{0})$
for brevity. The proof of \eqref{eq:asymptotic}  is a standard
application of the Hardy--Littlewood circle method and will not be
repeated here. (A more refined treatment of the analogous smoothly
weighted  
counting function is found in \cite[Thm.~4.1]{BL}, in which any
values $\Delta>n/2$ and $\delta<n/2-2$ are shown to be admissible.)

We remark that the analogous statement for quadratic forms of rank 4
is false in general, even for the forms $X_0X_1-(X_2^2-dX_3^2)$ with
non-square $d$ that are considered in Theorem \ref{MT++}.
We refer the reader to Linqvist \cite{Lind} for
further details on this phenomenon.

Let $M>\xi>1$ and let $P_\xi=\prod_{p\leq \xi}p.$ We shall tackle
Corollary \ref{cor:gcd} by observing that  
\begin{equation}\label{eq:123}
  \#S_1-\#S_2-\#S_3\leq
  \#\{\x\in \mathcal{R}_{f,g}\cap[-B,B]^n: Q(\x)=0\}\leq \#S_1,
\end{equation}
where 
\begin{align*}
S_1=\{\x\in \ZZ^n\cap[-B,B]^n: Q(\x)=0, ~\gcd(f(\x),g(\x),P_\xi)=1\},
\end{align*}
$S_2$ is the set of $\x\in S_1$ for which $p\mid \gcd(f(\x),g(\x))$
for some $p\in (\xi,M]$,  and finally  $S_3$ is the set of
$\x\in S_1$ for which $p\mid \gcd(f(\x),g(\x))$ for some $p>M$.
  Noting that $f=g=0$ cuts out a codimension $2$ subvariety in the
  hypersurface $Q=0$, it follows from  Theorem \ref{MT+} that 
\begin{equation}\label{eq:S3}
\#S_3\ll B^{n-2+\ve} M^{-1} +B^{n-2-1/(n-1)+\ve},
\end{equation}
for any $\ve>0$.

Turning to the size of $S_1$ we use inclusion--exclusion to deduce that 
\begin{align*}
\#S_1 &=\sum_{q\mid P_\xi} \mu(q) \#\left\{\x\in \ZZ^n\cap[-B,B]^n: 
\begin{array}{l}
Q(\x)=0\\ f(\x)\equiv g(\x)\equiv 0 \mod{q}\end{array}\right\}\\
&=\sum_{q\mid P_\xi} \mu(q) \sum_{\substack{\a \in (\ZZ/q\ZZ)^n\\ 
Q(\a)\equiv f(\a)\equiv g(\a)\equiv 0 \mod{q}}} N(B;q,\a).
\end{align*}	
Note that there are at most $q^n$ vectors $\a$ which contribute to the
final sum. Invoking \eqref{eq:asymptotic}, and recalling
that $c=c(1,\mathbf{0})$, it follows that 
\beql{S1}
\#S_1=c B^{n-2}\sum_{q\mid P_\xi} \mu(q)g(q) +O\left(B^{n-2-\delta} 
\sum_{q\mid P_\xi} q^{n+\Delta}\right),
\eeq
with
\[g(q)=\sum_{\substack{\a \in (\ZZ/q\ZZ)^n\\ Q(\a)\equiv
    f(\a)\equiv g(\a)\equiv 0 \mod{q}}}\hspace{-0.2cm}
\prod_{p\mid q}\lim_{k\to \infty}\frac{\nu(p^k;p,\a)}{\nu(p^k)}.\]
The error term here is found to be 
\[\ll B^{n-2-\delta} \prod_{p\leq \xi} p^{n+\Delta}= B^{n-2-\delta} 
\exp\left((n+\Delta)\sum_{p\leq \xi} \log p\right)
\leq B^{n-2-\delta}  e^{2(n+\Delta)\xi},\]
if $\xi\gg 1$, by the prime number theorem.

For the main term in (\ref{S1}) we wish to extend the product to run
over all primes. The function $g(q)$
is multiplicative and for any prime $p$ we have 
\[g(p)=\lim_{k\to \infty}\frac{\nu_0(p^k)}{\nu(p^k)},\]
where
\[\nu_0(p^k)=\#\left\{\x\in (\ZZ/p^k\ZZ)^n: 
\begin{array}{l}Q(\x)\equiv 0\mod{p^k}\\
f(\x)\equiv g(\x)\equiv 0 \mod{p}\end{array}\right\}.\]
It is clear that $g(p)\le 1$ for every prime, but we will need a
better bound for large $p$. Suppose that $Q$ has rank $r\ge 5$. If $p$
is odd, we may diagonalize $Q$ modulo $p^k$ as
$\mathrm{Diag}(d_1,\dots,d_r,0,\dots,0)$ with respect to a
suitable basis, and if $p$ is large enough
we will have $p\nmid d_i$ for $1\le i\le r$. Using this new basis we
see that $\nu(p^k;p,\a)$
counts $\x\in (\ZZ/p^k\ZZ)^n$ with $\x\equiv\a\mod{p}$ and
\beql{de}
\sum_{i=1}^rd_ix_i^2\equiv 0\mod{p^k}.
  \eeq
If we write $\mathbf{b}=(a_1,\dots,a_r)$ it follows that
$\nu(p^k;p,\a)=p^{(k-1)(n-r)}\xi(p^k;p,\mathbf{b})$, where
$\xi(p^k;p,\mathbf{b})$
counts $\x\in (\ZZ/p^k\ZZ)^r$ with $\x\equiv\mathbf{b}\mod{p}$, such
that (\ref{de}) holds. When $\mathbf{b}\not\equiv\mathbf{0}\mod{p}$ we
find that $\xi(p^k;p,\mathbf{b})=p^{(k-1)(r-1)}$, by Hensel's Lemma,
so that $\nu(p^k;p,\a)=p^{(k-1)(n-1)}$. For large $p$ the number of
$\a\mod{p}$ for which $Q(\a)\equiv f(\a)\equiv g(\a)\equiv 0\mod{p}$
will be $O(p^{n-3})$, so that vectors $\a$ for which 
$\mathbf{b}\not\equiv\mathbf{0}\mod{p}$ contribute $O(p^{k(n-1)-2})$
to $\nu_0(p^k)$. On the other hand, a standard calculation gives
$\xi(p^k;p,\mathbf{0})\ll p^{r+(k-2)(r-1)}$, so that
$\nu(p^k;p,\a)\ll p^{(k-1)(n-1)+1}$ for those $\a$ for which
$\mathbf{b}\equiv\mathbf{0}\mod{p}$. The number of such $\a$ is
$p^{n-r}\le p^{n-5}$, whence this case contributes $O(p^{k(n-1)-3})$
to $\nu_0(p^k)$. However a standard analysis shows that
$\nu(p^k)\gg p^{k(n-1)}$, so that
\[g(p)\ll\lim_{k\to\infty}\frac{p^{k(n-1)-2}+p^{k(n-1)-3}}{p^{k(n-1)}}\ll p^{-2}.\]
 Since $g(q)$ is multiplicative we then have $g(q)=O(q^{-3/2})$ for
 any square-free $q\in \NN$.  Hence it follows that 
\[\sum_{q\mid P_\xi} \mu(q)g(q)-\sum_{q=1}^{\infty}\mu(q)g(q) 
\ll  \sum_{q>\xi} \frac{1}{q^{3/2}}\ll \xi^{-1/2}.\] 
Our work so far has therefore shown that 
\begin{equation}\label{eq:S1}
\#S_1=c B^{n-2} \prod_{p} \mu_{Q,p}(\mathcal{R}_{f,g}) 
+O(\xi^{-1/2}B^{n-2})+O(e^{2(n+\Delta)\xi} B^{n-2-\delta}) ,
\end{equation}
where 
$\mu_{Q,p}(\mathcal{R}_{f,g})$
is as in the statement of Corollary \ref{cor:gcd}.

To handle $S_2$, we
note that 
$$
\#S_2
\leq \sum_{p\in (\xi,M]  }
\sum_{\substack{\a \in (\ZZ/p\ZZ)^n\\ 
Q(\a)\equiv f(\a)\equiv g(\a)\equiv 0 \mod{p}}} N(B;p,\a).
$$
But 
  \eqref{eq:asymptotic} allows us to conclude that 
\begin{equation}\label{eq:coro}
\sum_{\substack{\a \in (\ZZ/p\ZZ)^n\\ 
Q(\a)\equiv f(\a)\equiv g(\a)\equiv 0 \mod{p}}} N(B;p,\a)
\ll  \frac{B^{n-2}}{p^2}+ p^{n+\Delta}B^{n-2-\delta} ,
\end{equation}
since $g(p)=O( p^{-2})$.  
Summing over $p\in (\xi,M] $ it  follows that
\begin{equation}\label{eq:S2}
\#S_2 \ll   \frac{B^{n-2}}{\xi\log \xi}  +
M^{n+1+\Delta}B^{n-2-\delta}.
\end{equation}

We now return to \eqref{eq:123}, taking  $\xi=\sqrt{\log B}$ and 
$M=B^{\delta/(2(n+1+\Delta))}$. Making the choice
$\ve=\delta/(4(n+1+\Delta))$ in \eqref{eq:S3}, and combining it with
\eqref{eq:S1} and \eqref{eq:S2}, it follows that  
$$
\#\{\x\in \mathcal{R}_{f,g}\cap[-B,B]^n: Q(\x)=0\}=
c  B^{n-2}  \prod_{p} \mu_{Q,p}(\mathcal{R}_{f,g}) 
 +O\left(\frac{B^{n-2}}{(\log B)^{1/4}}\right).
$$
Finally, we divide both sides by $N(B;1,\mathbf{0})$ and reapply
\eqref{eq:asymptotic}, before taking a limit $B\to \infty$ in order to
complete the proof of Corollary \ref{cor:gcd}.

\section{Proof of Corollary \ref{c:SA}: arithmetic purity}\label{s:SA}

Let $m\geq 4$ and let  $X\subset \PP^{m}$ be a smooth hypersurface defined 
by a non-singular indefinite quadratic form $Q\in \ZZ[X_0,\dots,X_m]$.
Let  $Z\subset X$ be a codimension 2 subvariety and put  $U=X\setminus Z$.
To establish  strong approximation off $\infty$ on $U$ we must show  that 
for any point $(P_p)_p$ in the set of finite adelic points $U(\A_\QQ
^f)$ and for any finite set $S$ of primes, there exists a point 
$P\in U(\QQ)$ which is arbitrarily close to $P_p$ for all $p\in S$.

There exists  an integral  model $\mathcal{U}$ for $U$ 
over $\ZZ$. It will suffice to show that there exists a point 
$P\in U(\QQ)$ with $P\in \mathcal{U}(\ZZ_{p})$ for all $p\not\in S$,
such that $P$ is arbitrarily close to $P_p$ for all $p\in S$. 

Let  $\mathcal{Z}$ be the scheme-theoretic closure of $Z$ in $\PP_\ZZ^m$.
We may suppose that $\mathcal{Z}$ is cut out by equations
$$
F_1(X_0,\dots,X_m)=\dots=F_r(X_0,\dots,X_m)=0,
$$
for  $F_1,\dots,  F_r\in \ZZ[X_0,\dots,X_m]$ such that the
intersection with $Q=0$ has codimension $3$ in $\PP^{m}$. 
For any prime $p$, elements of $\mathcal{U}(\ZZ_p)$  correspond to
vectors $\x\in \ZZ_p^{m+1}$ for which $Q(\x)=0$ and  
$$
\min\{\val_p (F_1(\x)), \dots, \val_p (F_r(\x))\}=0.
$$
Let $C\in \ZZ$ be a product of primes in $S$,  chosen so that 
$P_p'=CP_p\in \ZZ_p^{m+1}$ for all $p\in S$.
By the Chinese Remainder Theorem we can find a vector $\a\in\ZZ^{m+1}$ which is 
arbitrarily close to $P_p'$ for all  $p\in S$. A vector $\x\in
\ZZ^{m+1}$ representing a point in  
 $ U(\QQ)$ is then  close to $\a$ in the $p$-adic topology for all
$p\in S$ if any only if  
$\x\equiv \a \mod M$,
for a suitable positive integer $M$ built from the primes in $S$.
In order to establish Corollary~\ref{c:SA}, it will suffice to prove
the existence of a vector $\x\in \ZZ^{m+1}$, satisfying $Q(\x)=0$ and 
 $$
 \gcd(p,F_1(\x),\dots,F_{r}(\x))=1 \text{ for all $p\not \in S$},
 $$
 and  for which    $\x\equiv \a \mod M$.
Indeed, once this is achieved the vector
$C^{-1}\x$ will represent a point $P\in U(\QQ)$ which is $p$-adically
close to $P_p$ for all  
 $p\in S$ and which belongs to $\mathcal{U}(\ZZ_p)$ for all $p\not\in S$.

Finally, to deduce the existence of the vector $\x$ we count the
number of such vectors in the box  $[-B,B]^{m+1}$, 
as $B\to \infty$. But then we are  once more in the situation
considered in Section \ref{s:coprime}, where we dealt with exactly
this  question when  $S=\emptyset$ and $r=2$. Extending the argument to general
$S$ and $r$ is routine and will not be repeated here.

\section{Proof of Corollary \ref{c:codim}: local solubility}\label{s:Ekedahl}

The aim of this section is to prove Corollary  \ref{c:codim}, the main  tool for which 
is Theorem \ref{MT+}.  The strategy for our argument closely follows  the proof of 
Lemma 20 in work of Poonen and Stoll
\cite{PS}, as further developed  by Bright, Browning and Loughran \cite[Section 3]{wa}.
We shall write $m=n-1$ in order to simplify notation.  
Let $X\subset \PP^{n-1}$ be a hypersurface defined by an indefinite  quadratic form 
$Q\in \ZZ[X_1,\dots,X_n]$
of rank at least $5$. 
Let $\pi:Y \to X$ be a  morphism as in the statement of the theorem.
Thus the fibre of $\pi$ over every point of 
	codimension 1 is split and the generic fibre of $\pi$ is geometrically
	integral. Appealing to  Corollary~3.7 of \cite{wa}, it then follows
that there exist a finite set  $S$ of places of $\QQ$, together
	with models $\mathcal{Y}$ and $\mathcal{X}$ of $Y$ and $X$ over $\Spec( \ZZ_S)$
	and a  closed subset $\mathcal{Z} \subset \mathcal{X}$ of codimension 
	at least 2, such that 
	the map
	\[
	(\mathcal{Y} \setminus \pi^{-1}(\mathcal{Z})) (\ZZ_p) \to 
	(\mathcal{X} \setminus \mathcal{Z})(\ZZ_p)
	\]
	is surjective 
	for all primes  $p \not\in S$. We may assume without loss of generality that $S$ contains the infinite place. 
It follows that 
\begin{equation}\label{eq:include}
\{x \in X(\ZZ_p): x \bmod p \not \in \mathscr{Z}(\FF_p) \}
\subset \pi(Y(\QQ_p)),
\end{equation}
for all sufficiently large primes $p$.
We  proceed under the assumption that $\mathcal{Z}$ is cut out from
$\mathcal{X}$ by a system of forms $F_1,\dots,F_r\in
\ZZ[X_1,\dots,X_n]$. 
We henceforth allow all of the implied constants in this section to
depend on $F_1,\dots,F_r$ and on $X$. 

For any field $k$ and any 
subset $\Omega\subset \PP^{n-1}(k)$, we shall denote by 
$\Omega^{\text{aff}}$ the affine cone of $\Omega$.
For each prime $p$ we let $\Omega_p =\pi(Y(\QQ_p))^{\text{aff}}\cap \ZZ_p^n$. At the infinite place we put 
$\Omega_\infty=\{\x\in \pi(Y(\RR))^{\text{aff}}: |\x|\leq 1\}\cap \RR^n$.
Let $\mu_\infty$ and $\mu_p$ be the Haar measures on $\RR^n$ and $\ZZ_p^n$, respectively.
It follows from Lemma 3.9 of \cite{wa} that $\Omega_\nu$ is measurable with respect to $\mu_\nu$, with 
$
\mu_\nu(\partial \Omega_\nu )=0$ and $\mu_\nu(\Omega_\nu)>0.$
The proof of this result is based on the Tarski--Seidenberg--Macintyre theorem, as  applied here to the affine cone of the map obtained by composing $\pi$ with the $\QQ$-birational map to $\PP^{n-2}$ 
admitted by  $X$. If $x=(x_1:\dots:x_n)$ denotes the projective point in $\PP^{n-1}$ associated to a vector 
$\x=(x_1,\dots,x_n)$, then we have 
$$
\mu_p(\Omega_p)=\lim_{k\to \infty} \frac{ \#\left\{
\x\in (\ZZ/p^k\ZZ)^n: Q(\x)\equiv 0\mod{p^k}, ~\pi^{-1}(x)(\QQ_p)\neq \emptyset
\right\}}{p^{k(n-1)}}
$$
and 
$$
\mu_\infty(\Omega_\infty)= \lim_{\delta\to 0}\frac{1}{2\delta} \meas\left\{ 
\x\in [-1,1]^n: |Q(\x)|<\delta,~\pi^{-1}(x)(\RR)\neq \emptyset\right\}.
$$

Recall the notation 
$\mu_Q(\mathcal{S})$ that was introduced in 
\eqref{eq:muQ}, for any subset $\mathcal{S}\subset \ZZ^n$.
In order to prove Theorem \ref{c:codim}, it will   suffice to study 
$$
\mu_Q(\mathcal{R}_{\text{loc}})=
\lim_{B\to \infty} \frac{\#\{\x\in
  \mathcal{R}_\text{loc}\cap[-B,B]^n: Q(\x)=0\}}{\#\{\x\in \ZZ^n\cap[-B,B]^n:
  Q(\x)=0\}}, 
$$
where
$$
\mathcal{R}_{\text{loc}}=\left\{\x\in \ZZ^n:  \x \in \Omega_\nu \text{ for all  places $\nu$}
\right\}.
$$ 
Suppose first that there exists 
 $M\in \NN$ such that $\Omega_p=\ZZ_p^n$ for all primes $p>M$. 
 We let  $P=\prod_{p\leq M}\Omega_p$ and $Q=\prod_{p\leq M}(\ZZ_p^n\setminus \Omega_p)$.
The sets  $\Omega_p$ and $\ZZ_p^n\setminus \Omega_p$ have boundary of measure zero.
Hence by compactness we can cover the closure $\overline{P}$ of $P$ 
by a finite number of
boxes $\prod_{p\leq M} I_p$, 
the sum of whose measures is 
 arbitrarily close to the measure 
 $\prod_p \mu_p(\Omega_p)$
 of $\overline{P}$, 
where each 
$I_p\subset \ZZ_p^n$ is a 
 cartesian product of closed balls of the shape $\{x\in \ZZ_p: |x-a|_p\leq b\}$, for $a\in \ZZ_p$ and $b\in \RR$.
Similarly, the closure $\overline{Q}$ of $Q$ is covered by a finite
number of boxes  
$\prod_{p\leq M} J_p$, say, the sum of whose measures approximates the 
measure $1-\prod_p \mu_p(\Omega_p)$ 
of $\overline{Q}$ to arbitrary precision.

It follows from the Chinese Remainder Theorem 
that  there exist a vector $\a_M\in \ZZ^n$ and a modulus $q_M\in \NN$, depending on $M$,  such that for any $\x\in \ZZ^n$ we have 
$\x\in \prod_{p\leq M} I_p$ if and only if $\x\equiv \a_M \mod{q_M}$.
Let 
$$
N_\mathfrak{R}(B;q,\a) = \#\left\{\x\in \ZZ^n\cap B\mathfrak{R}: Q(\x)=0,~ \x\equiv \a\mod{q}\right\},
$$
for any $\mathfrak{R}\subset \RR^n$ of finite measure, any $q\in \NN$ and any $\a\in (\ZZ/q\ZZ)^n$.
When $\mathfrak{R}=[-1,1]^n$  we simply write $N(B;q,\a)$ and thereby recover the counting function that was introduced in \eqref{eq:count}.
It now follows from \eqref{eq:asymptotic} that 
\begin{align*} 
\mu_Q\left(\prod_{p\leq M} I_p\right)
&=\lim_{B\to \infty} \frac{N_{\Omega_\infty}(B;q_M,\a_M)}{N(B;1,\0)}\\
&=\frac{ \mu_\infty(\Omega_\infty)}
{\sigma_\infty}
\prod_{p}
	\frac{\mu_p(I_p)}{\sigma_p}.
	\end{align*}
Similarly, 
\begin{align*} 
\mu_Q\left(\prod_{p\leq M} J_p\right)
&=\frac{ \mu_\infty(\Omega_\infty)}{\sigma_\infty}
\prod_{p}
	\frac{\mu_p(J_p)}{\sigma_p}.
	\end{align*}
Combining  these facts, we  are therefore done when 
 there exists $M$ such that $\Omega_p=\ZZ_p^n$ for all primes $p>M$.  
 
We now turn to the general case. For $M \leq M'\leq \infty$ 
and $B > 0$, let
$$
f_{M,M'}(B) = \frac{\#\{\x \in \ZZ^n\cap B\Omega_\infty:  Q(\x)=0, ~\x \in \Omega_p \text{ for all }p \in [M,M')\}}
{N(B;1,\0)}.
$$
Put  $f_{M}(B)=f_{1,M}(B)$ and 
note that this is a non-increasing function of $M$.
According to  \eqref{eq:include}, there are forms 
$F_1,\dots, F_r\in \ZZ[X_1,\dots,X_n]$ whose common zero locus meets 
$X$ in a codimension $3$ subset of 
$\PP^{n-1}$, for which 
\begin{align*}
f_M(B)-f_\infty(B)
&=
 \frac{\#\{\x \in \ZZ^n\cap B\Omega_\infty: Q(\x)=0, ~ \text{$\exists$ $p>M$,  }\x \not\in \Omega_p \}}
{N(B;1,\0)}\\
&\leq 
 \frac{E(B,M)}
{N(B;1,\0)},
\end{align*}
where
$E(B,M)$ is  the number of
$\x \in \ZZ^n$ such that $ Q(\x)=0$ and $|\x|\leq B$, and for which 
$F_1(\x),\dots,F_r(\x)$ have a common prime divisor $p>M$.
We have ${N(B;1,\0)}\gg B^{n-2}$ by \eqref{eq:asymptotic}.
We may sort $E(B,M)$  into two contributions. 
Let $\eta>0$ be a parameter at our disposal. 
The contribution from $\x$ for which 
$F_1(\x),\dots,F_r(\x)$ have a common prime divisor $p>B^{\eta}$ 
is seen to be 
$$
\ll B^{\ve-\eta} + \frac{B^\ve}{B^{1/(n-1)}}, 
$$
by   Theorem \ref{MT+}.
Next, the contribution from $\x$ for which 
$F_1(\x),\dots,F_r(\x)$ have a common prime divisor $p\in(M,B^{\eta}]$ 
is at most 
\begin{align*}
\sum_{M<p\leq B^{\eta}}
\hspace{-0.3cm}
\sum_{\substack{
\a \in (\ZZ/p\ZZ)^n\\
Q(\a)\equiv F_1(\a)\equiv \dots\equiv  F_r(\a)\mod{p}}}
\hspace{-0.6cm}
N(B;p,\a)
&\ll \frac{B^{n-2}}{M\log M}
+B^{n-2-\delta+\eta(n+1+\Delta)}, 
\end{align*}
by \eqref{eq:coro}.
It  follows that 
$$
f_M(B)-f_{\infty}(B)\ll B^{\ve-\eta} + \frac{B^\ve}{B^{1/(n-1)}} 
+ \frac{1}{M\log M}+\frac{B^{\eta(n+1+\Delta)}}{B^\delta}, 
$$
for any $\ve>0$. On taking
$\eta=\delta/(n+2+\Delta)$ and choosing $\ve$ sufficiently small, 
we obtain
\begin{equation}	\label{eqn:uniform}
	\lim_{M \to \infty} \limsup_{B \to \infty} (f_M(B) - f_{\infty}(B)) = 0.
\end{equation}
Moreover, our work so far shows that
\begin{equation} \label{eqn:pointwise}
	\lim_{B \to \infty} f_{M,M'}(B) =
	\frac{ \mu_\infty(\Omega_\infty)}{\sigma_\infty}
\prod_{M\leq p<M'}
	\frac{\mu_p(\Omega_p)}{\mu_p\left(X(\QQ_p)^{\text{aff}}\cap \ZZ_p^n\right)},
\end{equation}
for all $M<M'<\infty$.
Combining  \eqref{eqn:uniform} and \eqref{eqn:pointwise}, we conclude that 
\begin{align*}
\lim_{B \to \infty} f_\infty(B)
&= \lim_{M \to \infty} \lim_{B \to \infty} f_M(B) \\
&=
	\frac{ \mu_\infty(\Omega_\infty)}{\sigma_\infty}
\lim_{M\to \infty}
\prod_{p<M}
	\frac{\mu_p(\Omega_p)}{\sigma_p}
	\end{align*}
To complete the proof, it suffices to show the convergence of the above infinite product.
In order to  apply  Cauchy's criterion we need to check that
$$
\lim_{M\to\infty}\sup_{M'\in\NN}\left|1 -\prod_{M \leq p<M+M'} 
	\frac{\mu_p(\Omega_p)}{\sigma_p}
	\right| =0.
$$
But \eqref{eqn:pointwise}  implies that the left hand side is
$$
\frac{\sigma_\infty}{\mu_\infty(\Omega_\infty)}\lim_{M \to \infty}
\sup_{M'\in\NN}\lim_{B\to\infty}\left|f_{1}(B)-f_{M,M+M'}(B)\right|, 
$$
which vanishes by a further application of \eqref{eqn:uniform}.
Combining our argument, we have therefore shown 
that 
$$
\mu_Q(\mathcal{R}_\text{loc})=
	\frac{ \mu_\infty(\Omega_\infty)}{\sigma_\infty}
\prod_{p}
	\frac{\mu_p(\Omega_p)}{\sigma_p},
$$
which thereby  completes the proof of Corollary \ref{c:codim}.


\begin{thebibliography}{00}

\bibitem{sieve}
M. Bhargava, The geometric sieve and the density of squarefree values of invariant polynomials.
  (\texttt{arXiv:1402.0031})

\bibitem{poly}
M. Bhargava, A. Shankar, X. Wang,
Squarefree values of polynomial discriminants I.  (\texttt{arXiv:1611.09806})

\bibitem{birch}
B.J. Birch,
Forms in many variables.
{\em Proc.\ Roy.\ Soc.\ Ser.\ A} {\bf 265} (1961/62), 245--263.

\bibitem{BL}
T.D. Browning and D. Loughran, 
Sieving rational points on varieties.
{\em Trans.\ Amer.\ Math.\ Soc.} {\bf 371} (2019), 5757--5785.


\bibitem{wa}
T.D. Browning,  M. Bright and D. Loughran,
Failures of weak approximation in families.
{\em Compositio Math.} {\bf 152} (2016), 1435--1475.


\bibitem{cassels}
J.W.S. Cassels,
{\em Introduction to the geometry of numbers}. 
Springer-Verlag, 1971.

\bibitem{CH}
  Y. Cao and Z. Huang, Arithmetic purity, geometric sieve, and
  counting integral points on affine quadrics. 
(\texttt{arXiv:2003.07287})

\bibitem{CX}
Y. Cao and F. Xu, Strong approximation with Brauer--Manin obstruction for toric
varieties.
{\em Ann.\ Inst.\ Fourier} {\bf 68} (2018), 1879--1908.

\bibitem{CS}
  J.E. Cremona and M. Sadek, Local and global densities for
  Weierstrass models of elliptic curves.
  (\texttt{arXiv:2003.08454})

\bibitem{ekedahl}
T. Ekedahl, An infinite version of the Chinese remainder theorem. {\em Comment.\ Math.\ Univ.\ St.\ Paul.} {\bf 40} (1991), 53--59.

\bibitem{HW}
Y. Harpaz and O. Wittenberg,
On the fibration method for zero-cycles
and rational points.
{\em Annals of Math.} {\bf 183} (2016), 229--295.


\bibitem{Lind} S. Lindqvist,
  Weak approximation results for quadratic forms in four variables.
  (\texttt{arXiv:1704.00502}). 

\bibitem{Dan} D. Loughran,
The number of varieties in a family which contain a rational point.
{\em J.\  Eur.\ Math.\ Soc.} {\bf  20} (2018), 2539--2588.

\bibitem{poonen}
B. Poonen, Squarefree values of multivariable polynomials. {\em Duke Math.\ J.} {\bf 118} (2003), 
189--373.

\bibitem{PS}
B. Poonen and M. Stoll,
The Cassels--Tate pairing on polarized abelian varieties.
{\em Annals of Math.}  {\bf 150}  (1999), 1109--1149.

\bibitem{MR2029869}
B. Poonen and J. F. Voloch, Random Diophantine equations. 
{\em Arithmetic of higher-dimensional algebraic varieties (Palo Alto,
CA, 2002)}, 175--184,
Progr.\ Math. {\bf 226}, Birkh\"auser, 2004.

\bibitem{prasolov}
V.V. Prasolov,  {\em Polynomials}. Algorithms and Computation in Mathematics \textbf{11},
Springer-Verlag, 2004.



\bibitem{W}
O. Wittenberg, Rational points and zero-cycles on rationally connected varieties over number fields.
{\em Algebraic Geometry: Salt Lake City 2015}, Part 2, 597--635, Proceedings of Symposia in Pure Mathematics {\bf 97}, American Mathematical Society,  2018.


\end{thebibliography}
\end{document}